\thanks{}
\begin{document}
\title[$\psi $-Hilfer fractional
functional integrodifferential equation]
{Existence and Ulam-Hyers Mittag-Leffler stability of $\psi $-Hilfer fractional
functional integrodifferential equation}

\author[M. S. Abdo, A. M. Saeed, S. K. Panchal]
{Mohammed S. Abdo, Abdulkafi M. Saeed, Satish K. Panchal}

\address{Mohammed S. Abdo \newline
Department of Mathematics,
Dr.Babasaheb Ambedkar Marathwada University,
 Aurangabad, (M.S),431001, India}
\email{msabdo1977@gmail.com}

\address{ Abdulkafi M. Saeed  \newline
 Department of Mathematics,
 College of Science, Qassim University
 Buraydah-51452, Saudi Arabia}
\email{abdelkafe@yahoo.com}

\address{Satish K. Panchal \newline
Department of Mathematics,
Dr.Babasaheb Ambedkar Marathwada University,
 Aurangabad, (M.S),431001, India}
\email{drpanchalskk@gmail.com}

\dedicatory{A PREPRINT}

\thanks{}
\subjclass[2010]{34A08, 34A12, 34A60, 37K45}
\keywords{fractional differential equation; finite delay; Picard operator;  Ulam-Hyers- stability}

\begin{abstract}
 This paper is committed to establishing the assumptions
essential for the existence and uniqueness results of a fractional
functional integrodifferential equation (FFIDE) having a derivative of
generalized Hilfer type. Using the Picard operator method, and Banach fixed
point theorem, we obtain the existence and uniqueness solution to the proposed
problem. Along with this, the Ulam-Hyers Mittag-Leffler (UHML)
stability is discussed via Pachpatte's inequality. For supporting our
results, an illustrative example will be introduced.
\end{abstract}

\maketitle
\numberwithin{equation}{section}
\newtheorem{theorem}{Theorem}[section]
\newtheorem{lemma}[theorem]{Lemma}
\newtheorem{remark}[theorem]{Remark}
\newtheorem{definition}[theorem]{Definition}
\newtheorem{example}[theorem]{Example}
\allowdisplaybreaks

\section{Introduction}

The theory of fractional differential equations is much significant due to
their nonlocal property is convenient to describe memory phenomena in many
applied fields such as biological sciences, physical sciences, economics,
engineering, and in fluid dynamic traffic model. The existence, stability
and control theory to fractional differential equations have been emerging
as an important area of investigation in the last few decades. For details,
we refer the reader to monographs of Samko \cite{SAO21}, Podlubny \cite{IP18}%
, Hilfer \cite{RH19}, Kilbas \cite{KL1}, and the papers \cite%
{AB,AH,AMB,FK,WZF} and the references therein.
For the recent review of the fractional calculus operators, see \cite%
{AR1,AH1,AMB,SO2,KL1,Yang1,Yang5,Yang6,Yang7}.

On the other hand, the stability results of functional differential
equations have been strongly developed. Very significant contributions about
this topic were introduced by Ulam \cite{Ul15}, Hyers \cite{Hy4} and this
type of stability called Ulam-Hyers stability. Thereafter improvement of
Ulam-Hyers stability provided by Rassias \cite{Ra10} in 1978.

Most recently a fractional derivative with kernel of function is introduced
by Almedia in \cite{AR1}, Da Sousa and De Oliveira in \cite{SO2}. The recent
development of $\psi $-fractional differential equations and the theoretical
analysis can be seen in \cite{AP1,AP2,HSB,KU,LWO,SAB,WZF}. For the recent
review of fractional functional differential equations, we will survey some
of the works as follows:

D. Otrocol, V. Ilea in \cite{OL} studied the Ulam--Hyers stability and
generalized Ulam--Hyers--Rassias stability for the following delay
differential equation

\begin{equation*}
\left\{
\begin{array}{c}
u^{\prime }(t)=f(t,u(t),u(h(t)),\text{ \ }t\in \lbrack a,b], \\
u(t)=\psi (t),\qquad t\in \lbrack a-h,a]. \ \ \ \
\end{array}%
\right.
\end{equation*}

J. Wang and Y. Zhang \cite{WZ}, proved some results of existence,
uniqueness, and Ulam--Hyers--Mittag-Leffler stable of Caputo-type
fractional-order delay differential equation

\begin{equation}
\left\{
\begin{array}{c}
^{C}D_{0^{+}}^{\alpha }u(t)=f(t,u(t),u(h(t)),\text{ \ }t\in \lbrack 0,d], \\
u(t)=\psi (t),\qquad t\in \lbrack -h,0].\qquad \ \ \ \ \
\end{array}%
\right.  \label{wz}
\end{equation}

Liu et al. in \cite{LWO} established the existence, uniqueness, and
Ulam--Hyers--Mittag-Leffler stability of solutions to a class of $\psi $%
-Hilfer fractional-order delay differential equations%
\begin{equation}
\left\{
\begin{array}{c}
^{H}D_{0^{+}}^{\alpha ,\beta ;\psi }u(t)=f(t,u(t),u(h(t)),\text{ \ }t\in
(0,d], \\
I_{0^{+}}^{1-\gamma ;\psi }u(0^{+})=u_{0}\in
%TCIMACRO{\U{211d} }%
%BeginExpansion
\mathbb{R}
%EndExpansion
,\qquad \qquad \ \qquad \qquad \\
u(t)=\varphi (t),\qquad t\in \lbrack -h,0].\qquad \qquad \ \ \ \ \ \ \
\end{array}%
\right.  \label{lwo}
\end{equation}%
K.D. Kucche, and P.U. Shikhare in \cite{KU1} studied the existence,
uniqueness of a solution and Ulam type stabilities for Volterra delay
integro-differential equations on a finite interval%
\begin{equation}
\left\{
\begin{array}{c}
u^{\prime }(t)=f\left(
t,y(t),y(g(t)),\int_{0}^{t}h(t,s,y(s),y(g(s))ds\right) ,\text{ \ }t\in
\lbrack 0,b], \\
u(t)=\varphi (t),\qquad t\in \lbrack -r,0],\ 0<r<\infty ,%
\end{array}%
\right.  \label{ku1}
\end{equation}

Motivated by aforesaid works, in this paper, we establish the existence,
uniqueness and UHML stability of solutions for $\psi $-Hilfer
fractional-order functional integrodifferential equations of the form:

\begin{equation}
\left\{
\begin{array}{c}
^{H}D_{0^{+}}^{\alpha ,\beta ;\psi }u(t)=f\left(
t,y(t),y(g(t)),\int_{0}^{t}h(t,s,y(s),y(g(s))ds\right) ,\text{ \ }t\in (0,b],
\\
I_{0^{+}}^{1-\gamma ;\psi }u(0^{+})=u_{0},\qquad 0<\gamma \leq 1\qquad
\qquad \qquad \\
u(t)=\varphi (t),\qquad t\in \lbrack -r,0],\ 0<r<\infty ,\qquad \qquad%
\end{array}%
\right.  \label{equ 1}
\end{equation}%
where $0<\alpha <1,$ $0\leq \beta \leq 1,$, $^{H}D_{0^{+}}^{\alpha ,\beta
;\psi }(\cdot )$ and $I_{0^{+}}^{1-\gamma ;\psi }(\cdot )$ are $\psi -$%
Hilfer fractional derivative of order$\ \left( \alpha ,\beta \right) ,$ and $%
\psi -$Riemann--Liouville fractional integral of order $1-\gamma $ $(\gamma
=\alpha +\beta (1-\alpha ),$ respectvely$,$ $\varphi \in C([-r,0],%
%TCIMACRO{\U{211d} }%
%BeginExpansion
\mathbb{R}
%EndExpansion
),$ $f:[0,b]\times \mathbb{R}\times \mathbb{R}\times \mathbb{R}\rightarrow
\mathbb{R}$, $h:[0,b]\times \lbrack 0,b]\times \mathbb{R}\rightarrow \mathbb{%
R}$ and $g:[0,b]\rightarrow \lbrack -r,0]$ are continuous functions, and $%
g(t)\leq t$.

We apply Picard's operator method, Banach fixed point theorem, and the
Pachpatte's inequality to achieve our results. The results obtained in this
paper are more general than the known results and include the study of \cite%
{CR,OL,WZ,LWO,KU1} as special cases of (\ref{equ 1}).

The main contributions are as follows: In section 2, some preliminary
results and notations are provided which are useful in the sequel. In
Section 3, we study the existence and uniqueness results on the problem (\ref%
{equ 1}) by means of Banach fixed point theorem and Picard operator method.
Section 4 is devoted to discussing the UHML stability result via Pachpatte's
inequality. Finally, an illustrative example is provided in the last section.

\section{\textbf{Preliminaries\label{Sec2}}}

In this section, we will present some preliminaries and lemmas of fractional
calculus theory and nonlinear analysis which are used in this paper. Let $%
\left[ a,b\right] \subset
%TCIMACRO{\U{211d} }%
%BeginExpansion
\mathbb{R}
%EndExpansion
^{+}$ with $(0<a<b<\infty )$ and let $C\left[ a,b\right] $ be the space of
continuous function, $\omega :\left[ a,b\right] \rightarrow
%TCIMACRO{\U{211d} }%
%BeginExpansion
\mathbb{R}
%EndExpansion
$ with the norm $\left\Vert \omega \right\Vert _{C}=\max \{\left\vert \omega
(t)\right\vert :a\leq t\leq b\}.$ We consider the weighted spaces $%
C_{1-\gamma ;\psi }[a,b]$ as follows \newline
\begin{equation*}
C_{1-\gamma ;\psi }\left[ a,b\right] =\left\{ \omega :(a,b]\rightarrow
%TCIMACRO{\U{211d} }%
%BeginExpansion
\mathbb{R}
%EndExpansion
;\text{ }\left[ \psi (t)-\psi (a)\right] ^{1-\gamma }\omega (t)\in C\left[
a,b\right] \right\} ,
\end{equation*}%
\ where $0<\gamma <1,$ $n\in
%TCIMACRO{\U{2115} }%
%BeginExpansion
\mathbb{N}
%EndExpansion
,$ with the norm%
\begin{equation*}
\left\Vert \omega \right\Vert _{c_{1-\gamma ;\psi }}=\underset{t\in \left[
a,b\right] }{\max }\left\vert \left[ \psi (t)-\psi (a)\right] ^{1-\gamma
}\omega (t)\right\vert ,
\end{equation*}%
for $0<\gamma <1,$ $\delta \geq 0.$ Denote $E_{\alpha }(\cdot )\ $and $%
E_{\alpha ,\beta }(\cdot )$ by the Mittag-Leffler functions defined by

\begin{equation*}
E_{\alpha }(\omega )=\sum\limits_{k=0}^{\infty }\frac{\omega ^{k}}{\Gamma
(\alpha k+1)},\ \ \omega \in
%TCIMACRO{\U{2102} }%
%BeginExpansion
\mathbb{C}
%EndExpansion
,\ \ \Re (\alpha )>0.
\end{equation*}%
\begin{equation*}
E_{\alpha ,\beta }(\omega )=\sum\limits_{k=0}^{\infty }\frac{\omega ^{k}}{%
\Gamma (\alpha k+\beta )},\ \ \omega \in
%TCIMACRO{\U{2102} }%
%BeginExpansion
\mathbb{C}
%EndExpansion
,\ \ \Re (\alpha ),\Re (\beta )>0.
\end{equation*}

\begin{definition}
\label{d1} \cite{KL1,AR1} Let $\alpha >0$ be a real number and $\omega
:[a,b]\rightarrow
%TCIMACRO{\U{211d} }%
%BeginExpansion
\mathbb{R}
%EndExpansion
$\ a function. Given another function $\psi \in C^{1}\left[ a,b\right] $ be
an increasing having a continuous derivative $\psi ^{\prime }$ on $(a,b)$.
Then

The left-sided $\psi -$Riemann-Liouville fractional integral of $\omega ,$
of order $\alpha $ is defined by%
\begin{equation*}
I_{a^{+}}^{\alpha ,\psi }\omega (t)=\frac{1}{\Gamma (\alpha )}%
\int_{a}^{t}\psi ^{\prime }(s)(\psi (t)-\psi (s))^{\alpha -1}\omega (s)ds.
\end{equation*}%
The left-sided $\psi -$Riemann-Liouville fractional derivative of $\omega
\in C^{n}[a,b]$ of order $\alpha $ is defined by%
\begin{equation*}
D_{a^{+}}^{\alpha ,\psi }\omega (t)=\left( \frac{1}{\psi ^{\prime }(t)}\frac{%
d}{dt}\right) ^{n}I_{a^{+}}^{n-\alpha ,\psi }\omega (t),\text{ \ }n=[\alpha
]+1.
\end{equation*}%
The left-sided $\psi -$Caputo fractional derivative of $\omega ,$ of order $%
\alpha $ is defined by%
\begin{equation*}
^{C}D_{a^{+}}^{\alpha ,\psi }\omega (t)=D_{a^{+}}^{\alpha ,\psi }\left(
\omega (t)-\sum_{k=0}^{n-1}\frac{\omega _{\psi }^{[k]}(a)}{k!}(\psi (t)-\psi
(s))^{k}\right) ,
\end{equation*}%
where $n=[\alpha ]+1$\ for $\alpha \notin
%TCIMACRO{\U{2115} }%
%BeginExpansion
\mathbb{N}
%EndExpansion
,$ and $\omega _{\psi }^{[k]}(t)=\left( \frac{1}{\psi ^{\prime }(t)}\frac{d}{%
dt}\right) ^{k}\omega (t).$ In particular, if $n=\alpha $, we have $%
^{C}D_{a^{+}}^{\alpha ,\psi }\omega (t)=\omega _{\psi }^{[n]}(t).$

\begin{lemma}
\cite{KL1} Let $\alpha >0$ and $\beta >0$. Then, we have the following
semigroup property given by%
\begin{equation*}
I_{a^{+}}^{\alpha ,\psi }I_{a^{+}}^{\beta ,\psi }(\cdot )=I_{a^{+}}^{\alpha
+\beta ,\psi }(\cdot )
\end{equation*}
\end{lemma}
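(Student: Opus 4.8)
The plan is to prove the semigroup property
\[
I_{a^{+}}^{\alpha,\psi}I_{a^{+}}^{\beta,\psi}(\cdot)=I_{a^{+}}^{\alpha+\beta,\psi}(\cdot)
\]
by unwinding the definition of the $\psi$-Riemann--Liouville integral, interchanging the order of integration, and reducing the inner integral to the Beta function. First I would write out the left-hand side explicitly: applying the definition twice gives a double integral
\[
I_{a^{+}}^{\alpha,\psi}I_{a^{+}}^{\beta,\psi}\omega(t)
=\frac{1}{\Gamma(\alpha)\Gamma(\beta)}
\int_{a}^{t}\psi'(s)(\psi(t)-\psi(s))^{\alpha-1}
\int_{a}^{s}\psi'(\tau)(\psi(s)-\psi(\tau))^{\beta-1}\omega(\tau)\,d\tau\,ds.
\]
The main step is a Fubini/Tonelli argument: assuming $\omega$ is such that the iterated integral converges absolutely (continuity of $\omega$ together with integrability of the weak singularities for $\alpha,\beta>0$ suffices), I would swap the $s$ and $\tau$ integrations, so that $\tau$ runs over $[a,t]$ and $s$ runs over $[\tau,t]$.

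After the swap, I would factor out $\omega(\tau)\psi'(\tau)$ and focus on the inner $s$-integral
\[
\int_{\tau}^{t}\psi'(s)(\psi(t)-\psi(s))^{\alpha-1}(\psi(s)-\psi(\tau))^{\beta-1}\,ds.
\]
The key substitution is $v=\dfrac{\psi(s)-\psi(\tau)}{\psi(t)-\psi(\tau)}$, which is legitimate because $\psi$ is increasing with continuous derivative, so $dv=\dfrac{\psi'(s)}{\psi(t)-\psi(\tau)}\,ds$ and the limits become $v:0\to 1$. This transforms the inner integral into $(\psi(t)-\psi(\tau))^{\alpha+\beta-1}\int_{0}^{1}(1-v)^{\alpha-1}v^{\beta-1}\,dv$, and the remaining integral is exactly the Beta integral $B(\beta,\alpha)=\dfrac{\Gamma(\alpha)\Gamma(\beta)}{\Gamma(\alpha+\beta)}$.

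Substituting back, the factors $\Gamma(\alpha)\Gamma(\beta)$ cancel, leaving
\[
\frac{1}{\Gamma(\alpha+\beta)}\int_{a}^{t}\psi'(\tau)(\psi(t)-\psi(\tau))^{\alpha+\beta-1}\omega(\tau)\,d\tau,
\]
which is precisely $I_{a^{+}}^{\alpha+\beta,\psi}\omega(t)$, completing the proof. \textbf{The main obstacle} I anticipate is purely technical: justifying the interchange of integration order near the singularities at the endpoints, since the kernels carry factors $(\psi(t)-\psi(s))^{\alpha-1}$ and $(\psi(s)-\psi(\tau))^{\beta-1}$ that blow up when $\alpha<1$ or $\beta<1$. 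This is handled by noting that the singularities are integrable (the exponents exceed $-1$) and $\omega$ is continuous on the compact interval, so the double integral is absolutely convergent and Fubini's theorem applies; alternatively one may cite that this is the standard change-of-variables reduction of the classical Riemann--Liouville semigroup law under the monotone transformation $\psi$.
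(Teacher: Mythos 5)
Your proof is correct and follows the standard route: the paper itself gives no proof (it cites Kilbas--Srivastava--Trujillo), and the argument in that reference is exactly your Fubini interchange followed by the substitution $v=\frac{\psi(s)-\psi(\tau)}{\psi(t)-\psi(\tau)}$ reducing the inner integral to $B(\alpha,\beta)=\frac{\Gamma(\alpha)\Gamma(\beta)}{\Gamma(\alpha+\beta)}$. Your handling of the integrable endpoint singularities to justify Fubini is also the right technical point to flag.
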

\end{definition}

\begin{definition}
\textbf{\label{de3}} \cite{SO2} Let $n-1<\alpha <n$ $(n\in
%TCIMACRO{\U{2115} }%
%BeginExpansion
\mathbb{N}
%EndExpansion
)$, and $\omega ,\psi \in C^{n}[a,b]$ such that $\psi $ is an increasing
with $\psi ^{\prime }(t)\neq 0$ for all $t\in \lbrack a,b].$ Then the
left-sided $\psi $-Hilfer fractional derivative of $\omega $ of order $%
\alpha $ and type $0\leq \beta \leq 1$ is defined by%
\begin{equation*}
^{H}D_{a^{+}}^{\alpha ,\beta ,\psi }\omega (t)=I_{a^{+}}^{\beta (n-\alpha
);\psi }\left( \frac{1}{\psi ^{\prime }(t)}\frac{d}{dt}\right)
^{n}I_{a^{+}}^{(1-\beta )(n-\alpha );\psi }\omega (t).
\end{equation*}%
One has,
\begin{equation*}
^{H}D_{a^{+}}^{\alpha ,\beta ,\psi }\omega (t)=I_{a^{+}}^{\beta (n-\alpha
);\psi }D_{a^{+}}^{\gamma ;\psi }\omega (t),
\end{equation*}%
where
\begin{equation*}
D_{a^{+}}^{\gamma ;\psi }\omega (t)=\left( \frac{1}{\psi ^{\prime }(t)}\frac{%
d}{dt}\right) ^{n}I_{a^{+}}^{n-\gamma ;\psi }\omega (t),\text{ }\gamma
=\alpha +\beta (n-\alpha ).
\end{equation*}
\end{definition}

\begin{remark}
\label{re2} From Definition \ref{de3}, we observe that, if $0<\alpha <1,$ $%
0\leq \beta \leq 1$ and $\gamma =\alpha +\beta (1-\alpha ),$ then
\begin{equation*}
^{H}D_{a^{+}}^{\alpha ,\beta ,\psi }\omega (t)=I_{a^{+}}^{\beta (1-\alpha
);\psi }\left( \frac{1}{\psi ^{\prime }(t)}\frac{d}{dt}\right)
I_{a^{+}}^{(1-\beta )(1-\alpha );\psi }\omega (t).
\end{equation*}%
One has,
\begin{equation*}
^{H}D_{a^{+}}^{\alpha ,\beta ,\psi }\omega (t)=I_{a^{+}}^{\beta (1-\alpha
);\psi }D_{a^{+}}^{\gamma ;\psi }\omega (t)=I_{a^{+}}^{\gamma -\alpha ;\psi
}D_{a^{+}}^{\gamma ;\psi }\omega (t),
\end{equation*}%
where
\begin{equation*}
D_{a^{+}}^{\gamma ;\psi }\omega (t)=\left( \frac{1}{\psi ^{\prime }(t)}\frac{%
d}{dt}\right) I_{a^{+}}^{(1-\beta )(1-\alpha );\psi }\omega (t).
\end{equation*}%
Now, we introduce the weighted spaces \newline
\begin{equation*}
C_{1-\gamma ;\psi }^{\alpha ,\beta }[a,b]=\{\omega \in C_{1-\gamma ;\psi
}[a,b],D_{a^{+}}^{\alpha ,\beta ;\psi }\omega \in C_{1-\gamma ;\psi }[a,b]\},
\end{equation*}%
and
\begin{equation}
C_{1-\gamma ;\psi }^{\gamma }[a,b]=\{\omega \in C_{1-\gamma ;\psi
}[a,b],D_{a^{+}}^{\gamma ;\psi }\omega \in C_{1-\gamma ;\psi }[a,b]\},
\label{a1}
\end{equation}%
where $0<\gamma <1.$ Since $D_{a^{+}}^{\alpha ,\beta ;\psi }\omega
=I_{a^{+}}^{\beta (1-\alpha );\psi }D_{a^{+}}^{\gamma ;\psi }\omega ,$ it is
obvious that, $C_{1-\gamma ;\psi }^{\gamma }[a,b]\subset C_{1-\gamma ;\psi
}^{\alpha ,\beta }[a,b].$
\end{remark}

\begin{lemma}
\label{def8.8} \cite{AP1} Let $\alpha >0$, $\beta >0$ and $\gamma =\alpha
+\beta (1-\alpha ).$ If $\omega \in C_{1-\gamma ;\psi }^{\gamma }[J,%
%TCIMACRO{\U{211d} }%
%BeginExpansion
\mathbb{R}
%EndExpansion
]$, then\newline
\begin{equation*}
I_{a^{+}}^{\gamma ;\psi }D_{a^{+}}^{\gamma ;\psi }\omega =I_{a^{+}}^{\alpha
;\psi }D_{a^{+}}^{\alpha ,\beta ;\psi }\omega ,
\end{equation*}%
and
\begin{equation*}
D_{a^{+}}^{\gamma ;\psi }I_{a^{+}}^{\alpha ;\psi }\omega =D_{a^{+}}^{\beta
(1-\alpha );\psi }\omega .
\end{equation*}
\end{lemma}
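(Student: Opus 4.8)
The plan is to derive both identities purely from the semigroup property of the $\psi$-Riemann--Liouville integral (the semigroup lemma stated earlier), the representation of the $\psi$-Hilfer derivative recorded in Remark \ref{re2}, namely $D_{a^{+}}^{\alpha ,\beta ;\psi }\omega =I_{a^{+}}^{\beta (1-\alpha );\psi }D_{a^{+}}^{\gamma ;\psi }\omega$, and the defining formula $D_{a^{+}}^{\gamma ;\psi }\omega =\left( \frac{1}{\psi ^{\prime }}\frac{d}{dt}\right) I_{a^{+}}^{(1-\beta )(1-\alpha );\psi }\omega$. The one algebraic fact that makes everything fit is $1-\gamma =(1-\beta )(1-\alpha )$, obtained by substituting $\gamma =\alpha +\beta (1-\alpha )$; it also yields $1-\gamma +\alpha =1-\beta (1-\alpha )$, which is exactly what the second equality needs.

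For the first identity I would start from the right-hand side. Inserting the Remark \ref{re2} representation of $D_{a^{+}}^{\alpha ,\beta ;\psi }\omega$ gives $I_{a^{+}}^{\alpha ;\psi }D_{a^{+}}^{\alpha ,\beta ;\psi }\omega =I_{a^{+}}^{\alpha ;\psi }I_{a^{+}}^{\beta (1-\alpha );\psi }D_{a^{+}}^{\gamma ;\psi }\omega$. Since both orders $\alpha$ and $\beta (1-\alpha )$ are positive, the semigroup lemma collapses the two integrals into $I_{a^{+}}^{\alpha +\beta (1-\alpha );\psi }D_{a^{+}}^{\gamma ;\psi }\omega =I_{a^{+}}^{\gamma ;\psi }D_{a^{+}}^{\gamma ;\psi }\omega$, which is precisely the left-hand side.

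For the second identity I would expand the outer derivative via its definition, $D_{a^{+}}^{\gamma ;\psi }I_{a^{+}}^{\alpha ;\psi }\omega =\left( \frac{1}{\psi ^{\prime }}\frac{d}{dt}\right) I_{a^{+}}^{1-\gamma ;\psi }I_{a^{+}}^{\alpha ;\psi }\omega$, apply the semigroup property again (both orders $1-\gamma$ and $\alpha$ positive) to get $\left( \frac{1}{\psi ^{\prime }}\frac{d}{dt}\right) I_{a^{+}}^{1-\gamma +\alpha ;\psi }\omega$, and finally use $1-\gamma +\alpha =1-\beta (1-\alpha )$ to recognize this as $\left( \frac{1}{\psi ^{\prime }}\frac{d}{dt}\right) I_{a^{+}}^{1-\beta (1-\alpha );\psi }\omega =D_{a^{+}}^{\beta (1-\alpha );\psi }\omega$, the $\psi$-Riemann--Liouville derivative of order $\beta (1-\alpha )\in (0,1)$.

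The computations are short, so the real content is checking that every composition is legitimate rather than any deep estimate. The point to be careful about is the hypothesis $\omega \in C_{1-\gamma ;\psi }^{\gamma }[J,\mathbb{R}]$: it guarantees that $D_{a^{+}}^{\gamma ;\psi }\omega$ exists and lies in $C_{1-\gamma ;\psi }$, so that $I_{a^{+}}^{\alpha ;\psi }$ and $I_{a^{+}}^{\beta (1-\alpha );\psi }$ act on genuine weighted-continuous functions and the semigroup lemma applies; likewise one must confirm that $I_{a^{+}}^{1-\gamma ;\psi }I_{a^{+}}^{\alpha ;\psi }\omega$ is differentiable in the $\frac{1}{\psi ^{\prime }}\frac{d}{dt}$ sense so that the last step is valid. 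Granting the integrability and regularity afforded by that weighted space, I expect no further obstacle.
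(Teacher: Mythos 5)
Your argument is correct, but note that the paper does not prove this lemma at all: it is imported verbatim from the cited reference \cite{AP1}, so there is no in-paper proof to compare against, and what you have written is essentially the standard derivation that the cited source gives. Your two computations are sound: for the first identity, substituting $D_{a^{+}}^{\alpha ,\beta ;\psi }\omega =I_{a^{+}}^{\beta (1-\alpha );\psi }D_{a^{+}}^{\gamma ;\psi }\omega$ and collapsing $I_{a^{+}}^{\alpha ;\psi }I_{a^{+}}^{\beta (1-\alpha );\psi }$ to $I_{a^{+}}^{\gamma ;\psi }$ by the semigroup lemma is exactly right (and legitimate because the hypothesis puts $D_{a^{+}}^{\gamma ;\psi }\omega$ in $C_{1-\gamma ;\psi }$, so the integrals act on an admissible function); for the second, the arithmetic $1-\gamma =(1-\beta )(1-\alpha )$ and $1-\gamma +\alpha =1-\beta (1-\alpha )$ is what reconciles the two expressions for $D_{a^{+}}^{\gamma ;\psi }$ appearing in Definition \ref{de3} and Remark \ref{re2} and delivers $D_{a^{+}}^{\beta (1-\alpha );\psi }\omega$. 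The one step that is not purely formal is the differentiability of $I_{a^{+}}^{1-\gamma +\alpha ;\psi }\omega$ in the $\frac{1}{\psi ^{\prime }}\frac{d}{dt}$ sense: since $1-\gamma +\alpha \leq 1$ you cannot peel off a full integral of order one, so this really does rest on the mapping properties of $I_{a^{+}}^{\sigma ;\psi }$ on the weighted space established in \cite{AP1,KL1}. You flag this honestly rather than glossing over it, which is the right thing to do when reproving a quoted lemma; with that regularity granted, the proof is complete.
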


\begin{theorem}
\label{th2.3} \cite{SO2} Let $\omega \in C^{1}[a,b],$ $0<\alpha <1$, and $%
0\leq \beta \leq 1$. Then%
\begin{equation*}
^{H}D_{a^{+}}^{\alpha ,\beta ,\psi }I_{a^{+}}^{\alpha ,\psi }\omega
(t)=\omega (t).
\end{equation*}
\end{theorem}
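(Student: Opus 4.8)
The plan is to reduce the identity to the fundamental left-inverse property of the $\psi$-Riemann--Liouville calculus and then verify that its boundary term vanishes. Writing $\mu=\beta(1-\alpha)$, observe that $0\le\mu<1$ under the hypotheses and take $n=1$ throughout. Starting from the expression in Remark \ref{re2},
\begin{equation*}
{}^{H}D_{a^{+}}^{\alpha ,\beta ,\psi }I_{a^{+}}^{\alpha ,\psi }\omega (t)=I_{a^{+}}^{\beta (1-\alpha );\psi }\left( \frac{1}{\psi ^{\prime }(t)}\frac{d}{dt}\right) I_{a^{+}}^{(1-\beta )(1-\alpha );\psi }I_{a^{+}}^{\alpha ,\psi }\omega (t),
\end{equation*}
I would first collapse the two inner integrals using the semigroup property stated after Definition \ref{d1}: since $(1-\beta)(1-\alpha)+\alpha=1-\beta(1-\alpha)=1-\mu$, the composition $I_{a^{+}}^{(1-\beta)(1-\alpha);\psi}I_{a^{+}}^{\alpha,\psi}$ equals $I_{a^{+}}^{1-\mu;\psi}$. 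This leaves
\begin{equation*}
{}^{H}D_{a^{+}}^{\alpha ,\beta ,\psi }I_{a^{+}}^{\alpha ,\psi }\omega (t)=I_{a^{+}}^{\mu ;\psi }\left( \frac{1}{\psi ^{\prime }(t)}\frac{d}{dt}\right) I_{a^{+}}^{1-\mu ;\psi }\omega (t)=I_{a^{+}}^{\mu ;\psi }D_{a^{+}}^{\mu ;\psi }\omega (t),
\end{equation*}
so everything reduces to proving $I_{a^{+}}^{\mu;\psi}D_{a^{+}}^{\mu;\psi}\omega=\omega$. (The reduction can alternatively be reached through Lemma \ref{def8.8}, whose identity $D_{a^{+}}^{\gamma;\psi}I_{a^{+}}^{\alpha;\psi}\omega=D_{a^{+}}^{\beta(1-\alpha);\psi}\omega$ yields the same composition.)

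The degenerate case $\beta=0$ (hence $\mu=0$) is immediate, since both operators are the identity and $\left(\frac{1}{\psi'}\frac{d}{dt}\right)I_{a^{+}}^{1;\psi}\omega=\omega$ by the fundamental theorem of calculus. For $0<\mu<1$ I would avoid the general singular boundary formula and instead exploit that $\omega\in C^{1}[a,b]$. The key step is to pass from the Riemann--Liouville to the Caputo form: integrating by parts (or invoking the standard relation between the two derivatives in Definition \ref{d1}) gives
\begin{equation*}
D_{a^{+}}^{\mu ;\psi }\omega (t)=I_{a^{+}}^{1-\mu ;\psi }\frac{\omega ^{\prime }(t)}{\psi ^{\prime }(t)}+\frac{(\psi (t)-\psi (a))^{-\mu }}{\Gamma (1-\mu )}\,\omega (a).
\end{equation*}
Applying $I_{a^{+}}^{\mu;\psi}$ and using the semigroup law once more, the first term becomes $I_{a^{+}}^{1;\psi}(\omega'/\psi')=\int_{a}^{t}\omega'(s)\,ds=\omega(t)-\omega(a)$, while the second is handled by the power rule $I_{a^{+}}^{\mu;\psi}(\psi(t)-\psi(a))^{\nu-1}=\frac{\Gamma(\nu)}{\Gamma(\nu+\mu)}(\psi(t)-\psi(a))^{\nu+\mu-1}$ with $\nu=1-\mu$, which produces exactly $\omega(a)$. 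Adding the two contributions gives $\omega(t)$, completing the argument.

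I expect the main obstacle to be precisely the case $0<\mu<1$: the composition $I^{\mu}D^{\mu}$ is a genuine left inverse only up to a boundary term $\frac{(\psi(t)-\psi(a))^{\mu-1}}{\Gamma(\mu)}[I_{a^{+}}^{1-\mu;\psi}\omega](a)$, and the real content of the proof is showing this term is zero. The Caputo reformulation above sidesteps the delicate $\infty\cdot 0$ cancellation by making the vanishing explicit; alternatively one argues directly that $[I_{a^{+}}^{1-\mu;\psi}\omega](a)=\lim_{t\to a^{+}}I_{a^{+}}^{1-\mu;\psi}\omega(t)=0$, since $|I_{a^{+}}^{1-\mu;\psi}\omega(t)|\le \frac{\|\omega\|_{\infty}}{\Gamma(2-\mu)}(\psi(t)-\psi(a))^{1-\mu}\to 0$ by continuity and boundedness of $\omega$ on the compact interval together with $1-\mu>0$.
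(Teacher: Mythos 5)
Your argument is correct, but note that the paper itself offers no proof of this statement: Theorem \ref{th2.3} is simply quoted from \cite{SO2}, so there is no in-text argument to compare against. Your derivation is a legitimate self-contained proof using only tools already present in the paper. The reduction via the semigroup property, $(1-\beta)(1-\alpha)+\alpha=1-\beta(1-\alpha)$, correctly collapses the composition to $I_{a^{+}}^{\mu;\psi}D_{a^{+}}^{\mu;\psi}\omega$ with $\mu=\beta(1-\alpha)\in[0,1)$, and you rightly identify that the entire content is the vanishing of the boundary term $\frac{(\psi(t)-\psi(a))^{\mu-1}}{\Gamma(\mu)}\,[I_{a^{+}}^{1-\mu;\psi}\omega](a)$. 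Both of your routes work: the Caputo-type decomposition (whose two pieces, via the semigroup law and the power rule of Theorem \ref{th2.3a} with $\sigma=1-\mu$, recombine to $\omega(t)-\omega(a)+\omega(a)$), and the direct estimate $|I_{a^{+}}^{1-\mu;\psi}\omega(t)|\le \frac{\Vert\omega\Vert_{\infty}}{\Gamma(2-\mu)}(\psi(t)-\psi(a))^{1-\mu}\to 0$, which is essentially Theorem \ref{th2.5} applied with a continuous integrand. Two minor points worth making explicit if you write this up: the integration-by-parts step establishing $D_{a^{+}}^{\mu;\psi}\omega=I_{a^{+}}^{1-\mu;\psi}(\omega'/\psi')+\frac{(\psi(t)-\psi(a))^{-\mu}}{\Gamma(1-\mu)}\omega(a)$ genuinely uses $\omega\in C^{1}[a,b]$ and the assumption $\psi'\neq 0$, and the semigroup lemma as stated requires both orders strictly positive, so the cases $\beta=0$ and $\beta=1$ (where one of the exponents $\beta(1-\alpha)$ or $(1-\beta)(1-\alpha)$ vanishes) should be dispatched separately as the corresponding $I^{0;\psi}$ being the identity, exactly as you do for $\mu=0$.
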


\begin{theorem}
\label{th2.3a} \cite{KL1,SO2} Let $\alpha ,\sigma >0$, and $0\leq \beta \leq
1.$ Then
\end{theorem}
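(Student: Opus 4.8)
The plan is to establish the two power-rule identities
\[
I_{a^+}^{\alpha;\psi}\left[\psi(t)-\psi(a)\right]^{\sigma-1}=\frac{\Gamma(\sigma)}{\Gamma(\sigma+\alpha)}\left[\psi(t)-\psi(a)\right]^{\sigma+\alpha-1}
\]
and
\[
{}^{H}D_{a^+}^{\alpha,\beta;\psi}\left[\psi(t)-\psi(a)\right]^{\sigma-1}=\frac{\Gamma(\sigma)}{\Gamma(\sigma-\alpha)}\left[\psi(t)-\psi(a)\right]^{\sigma-\alpha-1}
\]
by direct computation, reducing the integral case to the Euler Beta integral and then bootstrapping the derivative case from it.

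First I would handle the integral identity. Substituting $\omega(s)=[\psi(s)-\psi(a)]^{\sigma-1}$ into the definition of $I_{a^+}^{\alpha;\psi}$ from Definition \ref{d1} gives an integral of $\psi'(s)(\psi(t)-\psi(s))^{\alpha-1}(\psi(s)-\psi(a))^{\sigma-1}$ over $[a,t]$. The crucial step is the change of variables $u=\frac{\psi(s)-\psi(a)}{\psi(t)-\psi(a)}$, which is legitimate because $\psi$ is increasing with $\psi'>0$: as $s$ runs from $a$ to $t$, $u$ runs monotonically from $0$ to $1$, and $\psi'(s)\,ds=(\psi(t)-\psi(a))\,du$. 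After pulling the powers of $\psi(t)-\psi(a)$ out of the integral, what remains is $\int_0^1 u^{\sigma-1}(1-u)^{\alpha-1}\,du=B(\sigma,\alpha)=\frac{\Gamma(\sigma)\Gamma(\alpha)}{\Gamma(\sigma+\alpha)}$; the factor $\Gamma(\alpha)$ cancels the $1/\Gamma(\alpha)$ prefactor, producing the first identity.

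For the derivative identity I would use the decomposition recorded in Remark \ref{re2}, namely ${}^{H}D_{a^+}^{\alpha,\beta;\psi}=I_{a^+}^{\gamma-\alpha;\psi}D_{a^+}^{\gamma;\psi}$ with $\gamma=\alpha+\beta(1-\alpha)$, where $D_{a^+}^{\gamma;\psi}=\left(\frac{1}{\psi'(t)}\frac{d}{dt}\right)I_{a^+}^{1-\gamma;\psi}$ (using the identity $(1-\beta)(1-\alpha)=1-\gamma$), and apply the integral rule in cascade. Applying $I_{a^+}^{1-\gamma;\psi}$ turns the power $[\psi(t)-\psi(a)]^{\sigma-1}$ into a multiple of $[\psi(t)-\psi(a)]^{\sigma-\gamma}$; the operator $\frac{1}{\psi'(t)}\frac{d}{dt}$ acts on $[\psi(t)-\psi(a)]^{p}$ exactly as ordinary differentiation in the variable $\psi(t)$, sending it to $p[\psi(t)-\psi(a)]^{p-1}$; and the final $I_{a^+}^{\gamma-\alpha;\psi}$ is again evaluated by the integral rule. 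Tracking the Gamma factors, the first two steps yield the coefficient $\Gamma(\sigma)/\Gamma(\sigma-\gamma)$ (after using $p/\Gamma(p+1)=1/\Gamma(p)$ with $p=\sigma-\gamma$ to absorb the derivative's constant), and the last step contributes $\Gamma(\sigma-\gamma)/\Gamma(\sigma-\alpha)$; the intermediate $\Gamma(\sigma-\gamma)$ telescopes away, leaving exactly $\Gamma(\sigma)/\Gamma(\sigma-\alpha)$.

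The only real difficulty is bookkeeping rather than depth. I would carry the explicit constant at every stage instead of collecting it at the end, since the three successive Gamma-shifts are easy to mismanage, and I would note the admissibility conditions that make each application of the integral rule valid (the exponents must keep the arguments of every Gamma function well defined, which is guaranteed under $\sigma,\alpha>0$ and $0\le\beta\le1$). With those checks in place the two identities follow, and by specialization they recover the classical $\psi$-Riemann--Liouville formula of \cite{KL1} and the $\psi$-Hilfer formula of \cite{SO2}.
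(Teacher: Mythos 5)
The paper does not actually prove this statement: it is quoted from \cite{KL1,SO2} with no in-text argument, so there is nothing of the authors' own to compare yours against. Your derivation is the standard one and its substance is right. The substitution $u=(\psi(s)-\psi(a))/(\psi(t)-\psi(a))$ reduces the fractional integral of the power function to the Euler Beta integral $B(\sigma,\alpha)=\Gamma(\sigma)\Gamma(\alpha)/\Gamma(\sigma+\alpha)$, which gives the first identity for all $\sigma,\alpha>0$; and the derivative formula follows by pushing the power rule through the factorization $^{H}D_{a^{+}}^{\alpha,\beta;\psi}=I_{a^{+}}^{\gamma-\alpha;\psi}\left(\frac{1}{\psi'(t)}\frac{d}{dt}\right)I_{a^{+}}^{1-\gamma;\psi}$ of Remark \ref{re2}, with the Gamma factors telescoping exactly as you describe.

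The one genuine defect is your parenthetical claim that admissibility is ``guaranteed under $\sigma,\alpha>0$ and $0\leq\beta\leq1$.'' The cascade needs the intermediate exponent to stay positive: after $I_{a^{+}}^{1-\gamma;\psi}$ the function is a multiple of $[\psi(t)-\psi(a)]^{\sigma-\gamma}$, so you need $\sigma>\gamma$ for the subsequent differentiation and final fractional integration to make sense. If $\sigma=\gamma$ the first stage yields a constant, the operator $\frac{1}{\psi'(t)}\frac{d}{dt}$ annihilates it, and the left-hand side is identically zero, whereas the claimed right-hand side $\frac{\Gamma(\gamma)}{\Gamma(\gamma-\alpha)}(\psi(t)-\psi(a))^{\gamma-\alpha-1}$ is not zero when $\beta>0$; if $\sigma<\gamma$ the final integral diverges at the lower limit. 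This is partly a defect of the statement as transcribed in the paper (the source \cite{SO2} imposes $\sigma>n$, i.e.\ $\sigma>1\geq\gamma$ here, for the derivative identity), but your write-up should state that restriction rather than assert that $\sigma>0$ suffices. The integral identity, by contrast, does hold for all $\sigma,\alpha>0$ exactly as you prove it.
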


\begin{equation*}
I_{a^{+}}^{\alpha ,\psi }\left[ \psi (t)-\psi (a)\right] ^{\sigma -1}=\frac{%
\Gamma (\sigma )}{\Gamma (\alpha +\sigma )}(\psi (t)-\psi (a))^{\alpha
+\sigma -1}
\end{equation*}%
and%
\begin{equation*}
^{H}D_{a^{+}}^{\alpha ,\beta ,\psi }\left[ \psi (t)-\psi (a)\right] ^{\sigma
-1}=\frac{\Gamma (\sigma )}{\Gamma (\sigma -\alpha )}(\psi (t)-\psi
(a))^{\sigma -\alpha -1}.
\end{equation*}

\begin{theorem}
\label{th2.4} \cite{SO2} If $0<\alpha <1$, $0\leq \beta \leq 1$, $0<\gamma
<1\ $and that $\ \omega \in C_{1-\gamma }[a,b],$ $I_{a^{+}}^{1-\gamma ;\psi
}\omega \in C_{1-\gamma }^{1}[a,b],$ then%
\begin{equation*}
I_{a^{+}}^{\alpha ;\psi }\ ^{H}D_{a^{+}}^{\alpha ,\beta ,\psi }\omega
(t)=\omega (t)-\frac{I_{0^{+}}^{1-\gamma ;\psi }\omega (a)}{\Gamma (\gamma )}%
(\psi (t)-\psi (a))^{\gamma -1}.
\end{equation*}
\end{theorem}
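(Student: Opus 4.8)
The plan is to peel off the type parameter $\beta$ via the semigroup property and then reduce the whole statement to the inversion formula for the $\psi$-Riemann--Liouville derivative $D_{a^{+}}^{\gamma;\psi}$ of order $\gamma$. First I would invoke the decomposition from Remark \ref{re2}, namely ${}^{H}D_{a^{+}}^{\alpha,\beta,\psi}\omega = I_{a^{+}}^{\gamma-\alpha;\psi}D_{a^{+}}^{\gamma;\psi}\omega$ (valid since $\beta(1-\alpha)=\gamma-\alpha\geq 0$), apply $I_{a^{+}}^{\alpha;\psi}$, and use the semigroup property $I_{a^{+}}^{\alpha;\psi}I_{a^{+}}^{\gamma-\alpha;\psi}=I_{a^{+}}^{\gamma;\psi}$ to collapse the left-hand side to $I_{a^{+}}^{\gamma;\psi}D_{a^{+}}^{\gamma;\psi}\omega(t)$. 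The theorem then amounts to the single identity
\begin{equation*}
I_{a^{+}}^{\gamma;\psi}D_{a^{+}}^{\gamma;\psi}\omega(t)=\omega(t)-\frac{I_{a^{+}}^{1-\gamma;\psi}\omega(a)}{\Gamma(\gamma)}(\psi(t)-\psi(a))^{\gamma-1}.
\end{equation*}

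To prove this I would avoid a singular integration by parts and instead exploit two operator identities. Since $0<\gamma<1$, the order $1-\gamma$ lies in $(0,1)$, and by definition together with the semigroup property and the cancellation $\tfrac{1}{\psi'}\tfrac{d}{dt}I_{a^{+}}^{1;\psi}=\mathrm{id}$ one has $D_{a^{+}}^{1-\gamma;\psi}I_{a^{+}}^{1;\psi}=I_{a^{+}}^{\gamma;\psi}$ (both equal $\tfrac{1}{\psi'}\tfrac{d}{dt}I_{a^{+}}^{1+\gamma;\psi}$). Hence $I_{a^{+}}^{\gamma;\psi}D_{a^{+}}^{\gamma;\psi}\omega = D_{a^{+}}^{1-\gamma;\psi}\bigl(I_{a^{+}}^{1;\psi}D_{a^{+}}^{\gamma;\psi}\omega\bigr)$. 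Writing $g=I_{a^{+}}^{1-\gamma;\psi}\omega$, so that $D_{a^{+}}^{\gamma;\psi}\omega=\tfrac{1}{\psi'}g'$, the fundamental theorem of calculus gives $I_{a^{+}}^{1;\psi}D_{a^{+}}^{\gamma;\psi}\omega(t)=\int_{a}^{t}g'(s)\,ds=g(t)-g(a)=I_{a^{+}}^{1-\gamma;\psi}\omega(t)-I_{a^{+}}^{1-\gamma;\psi}\omega(a)$.

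Applying $D_{a^{+}}^{1-\gamma;\psi}$ to this difference finishes the computation: on the first term I would use the Riemann--Liouville inversion $D_{a^{+}}^{1-\gamma;\psi}I_{a^{+}}^{1-\gamma;\psi}\omega=\omega$ (this is Theorem \ref{th2.3} in the case $\beta=0$, where ${}^{H}D^{\alpha,0,\psi}=D^{\alpha;\psi}$), and on the constant term I would apply Theorem \ref{th2.3a} with $\sigma=1$, which yields $D_{a^{+}}^{1-\gamma;\psi}[\,1\,]=\tfrac{1}{\Gamma(\gamma)}(\psi(t)-\psi(a))^{\gamma-1}$; together these reproduce exactly the claimed right-hand side. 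The main obstacle is not the algebra but the regularity bookkeeping: the hypotheses $\omega\in C_{1-\gamma}[a,b]$ and $I_{a^{+}}^{1-\gamma;\psi}\omega\in C_{1-\gamma}^{1}[a,b]$ are precisely what guarantee that $g$ is differentiable with finite boundary value $g(a)=I_{a^{+}}^{1-\gamma;\psi}\omega(a)$, that the fundamental theorem of calculus applies despite the weight near $t=a$, and that each operator identity above is legitimate on the relevant weighted space. I would therefore verify at each step that the intermediate functions remain in the weighted spaces $C_{1-\gamma;\psi}$ before composing the operators.
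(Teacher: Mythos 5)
The paper does not prove this statement; it is quoted from \cite{SO2} without proof, so there is no internal argument to compare against. Your proposal is correct and follows the standard route: the reduction ${}^{H}D_{a^{+}}^{\alpha,\beta,\psi}\omega=I_{a^{+}}^{\gamma-\alpha;\psi}D_{a^{+}}^{\gamma;\psi}\omega$ together with the semigroup property collapses the left-hand side to $I_{a^{+}}^{\gamma;\psi}D_{a^{+}}^{\gamma;\psi}\omega$ (this is exactly Lemma \ref{def8.8}), and the remaining identity is the $\psi$-Riemann--Liouville inversion formula of order $\gamma$, which you then derive from scratch via $D_{a^{+}}^{1-\gamma;\psi}I_{a^{+}}^{1;\psi}=I_{a^{+}}^{\gamma;\psi}$, the fundamental theorem of calculus applied to $g=I_{a^{+}}^{1-\gamma;\psi}\omega$, and Theorem \ref{th2.3a} with $\sigma=1$. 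All the computations check out, including the finiteness of $g(a)$ (since $\omega\in C_{1-\gamma}[a,b]$ forces $I_{a^{+}}^{1-\gamma;\psi}\omega$ to be bounded near $a$) and the integrability of $g'$ despite its possible $(\psi(t)-\psi(a))^{\gamma-1}$ singularity. The one inaccuracy is your appeal to Theorem \ref{th2.3} for the step $D_{a^{+}}^{1-\gamma;\psi}I_{a^{+}}^{1-\gamma;\psi}\omega=\omega$: that theorem is stated for $\omega\in C^{1}[a,b]$, which a general $\omega\in C_{1-\gamma}[a,b]$ need not satisfy. The identity you actually need is the standard Riemann--Liouville left inverse property, valid for $\omega\in C_{1-\gamma}[a,b]$ (indeed for $\omega\in L^{1}$), so the step is true but should be justified by that weaker classical result rather than by Theorem \ref{th2.3}. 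With that citation repaired, the proof is complete.
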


\begin{theorem}
\label{th2.5} \cite{SO2} Let $\omega \in C_{\gamma }[a,b],$ $0<\gamma
<\alpha <1$. Then we have%
\begin{equation*}
I_{a^{+}}^{\alpha ;\psi }\ \omega (a)=\underset{t\longrightarrow a^{+}}{\lim
}I_{a^{+}}^{\alpha ;\psi }\omega (t)=0.
\end{equation*}
\end{theorem}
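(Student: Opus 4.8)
The plan is to reduce the whole statement to a single scalar estimate controlled by a positive power of $\psi(t)-\psi(a)$, which visibly vanishes as $t\to a^{+}$. First I would unpack the hypothesis $\omega\in C_{\gamma}[a,b]$: by the definition of the weighted space this means that the function $g(t):=[\psi(t)-\psi(a)]^{\gamma}\omega(t)$ extends continuously to all of $[a,b]$. In particular $g$ is bounded, say $|g(t)|\le M$ for every $t\in[a,b]$, so that $|\omega(s)|\le M\,[\psi(s)-\psi(a)]^{-\gamma}$ for all $s\in(a,b]$. Substituting this bound into the definition of the $\psi$-Riemann--Liouville integral (Definition \ref{d1}) gives
\[
\bigl|I_{a^{+}}^{\alpha;\psi}\omega(t)\bigr|\le \frac{M}{\Gamma(\alpha)}\int_{a}^{t}\psi^{\prime}(s)\,[\psi(t)-\psi(s)]^{\alpha-1}\,[\psi(s)-\psi(a)]^{-\gamma}\,ds .
\]

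The key step is to evaluate the remaining integral in closed form. I would observe that its integrand is precisely the one produced by the power rule of Theorem \ref{th2.3a} applied to $[\psi(s)-\psi(a)]^{\sigma-1}$ with the choice $\sigma-1=-\gamma$, i.e.\ $\sigma=1-\gamma$. Since $0<\gamma<1$ we have $\sigma>0$, so that theorem applies and yields
\[
\frac{1}{\Gamma(\alpha)}\int_{a}^{t}\psi^{\prime}(s)\,[\psi(t)-\psi(s)]^{\alpha-1}\,[\psi(s)-\psi(a)]^{-\gamma}\,ds=\frac{\Gamma(1-\gamma)}{\Gamma(\alpha+1-\gamma)}\,[\psi(t)-\psi(a)]^{\alpha-\gamma}.
\]
Equivalently, one can establish this directly through the substitution $u=[\psi(s)-\psi(a)]/[\psi(t)-\psi(a)]$, which turns the integral into the Beta function $B(\alpha,1-\gamma)$ times $[\psi(t)-\psi(a)]^{\alpha-\gamma}$; this also confirms convergence, the only singular factor being the integrable $[\psi(s)-\psi(a)]^{-\gamma}$ with $\gamma<1$.

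Combining the two displays, I obtain the uniform estimate $\bigl|I_{a^{+}}^{\alpha;\psi}\omega(t)\bigr|\le M\,\frac{\Gamma(1-\gamma)}{\Gamma(\alpha+1-\gamma)}\,[\psi(t)-\psi(a)]^{\alpha-\gamma}$. Because $\psi$ is continuous we have $\psi(t)\to\psi(a)$ as $t\to a^{+}$, and since the exponent $\alpha-\gamma$ is strictly positive (this is where the hypothesis $\gamma<\alpha$ is essential), the right-hand side tends to $0$. Hence $I_{a^{+}}^{\alpha;\psi}\omega$ extends continuously to $a$ with value $0$, giving $\lim_{t\to a^{+}}I_{a^{+}}^{\alpha;\psi}\omega(t)=0$ as asserted. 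The only place demanding care is the correct use of the two strict inequalities in $0<\gamma<\alpha<1$: the condition $\gamma<1$ guarantees integrability and the applicability of the power rule ($\sigma=1-\gamma>0$), while $\gamma<\alpha$ is exactly what makes the resulting power of $\psi(t)-\psi(a)$ positive and therefore forces it to vanish in the limit. Beyond this bookkeeping the argument is a routine estimate, so I do not anticipate any serious obstacle.
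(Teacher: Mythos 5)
Your proof is correct. The paper itself states this result without proof, citing \cite{SO2}, and your argument --- bounding $|\omega(s)|$ by $M[\psi(s)-\psi(a)]^{-\gamma}$ via the weighted-space hypothesis and then invoking the power rule of Theorem \ref{th2.3a} with $\sigma=1-\gamma$ to obtain the estimate $M\,\frac{\Gamma(1-\gamma)}{\Gamma(\alpha+1-\gamma)}[\psi(t)-\psi(a)]^{\alpha-\gamma}\to 0$ --- is exactly the standard argument used in that reference, with the roles of $\gamma<1$ (integrability) and $\gamma<\alpha$ (positive exponent) correctly identified.
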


\begin{definition}
\textbf{\label{de2.5}} \cite{WZ} Let $(X,d)$ be a metric space. Now $%
T:X\longrightarrow X$ is a Picard operator if there exists $u^{\ast }\in X$\
such that $F_{T}=$ $u^{\ast }$ where $F_{T}$ $=\{u\in X:T(u)=u\}$ is the
fixed point set of $T$, and the sequence $(T^{n}(u_{0}))_{n\in
%TCIMACRO{\U{2115} }%
%BeginExpansion
\mathbb{N}
%EndExpansion
}$ converges to $u^{\ast }$ for all $u_{0}\in X$.
\end{definition}

\begin{lemma}
\textbf{\label{le2.6}} \cite{WZ} Let $(X,d,\leq )$ be an ordered metric
space, and let $T:X\longrightarrow X$ be an increasing Picard operator with $%
F_{T}$ = $\{u_{T}^{\ast }\}$. Then for $u\in X,$ $u\leq T(u)$ implies$\
u\leq u_{T}^{\ast }.$
\end{lemma}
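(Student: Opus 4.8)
The plan is to exploit the interplay between the monotonicity of $T$ and the convergence guaranteed by the Picard property, closing the argument by passing to the limit in the order relation. First I would fix an arbitrary $u\in X$ satisfying $u\leq T(u)$ and form the sequence of iterates $(T^{n}(u))_{n\in\mathbb{N}}$.

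The key observation is that, since $T$ is increasing, applying $T$ to both sides of $u\leq T(u)$ gives $T(u)\leq T^{2}(u)$, and a straightforward induction then yields $T^{n}(u)\leq T^{n+1}(u)$ for every $n$. Hence the sequence $(T^{n}(u))_{n\in\mathbb{N}}$ is nondecreasing with respect to $\leq$, and in particular $u\leq T^{n}(u)$ for all $n$.

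Next I would invoke the Picard operator hypothesis: by Definition \ref{de2.5}, the sequence $(T^{n}(u))_{n\in\mathbb{N}}$ converges in the metric $d$ to the unique fixed point $u_{T}^{\ast}$, independently of the starting point $u$. Thus I would have on hand both the family of inequalities $u\leq T^{n}(u)$ and the convergence $T^{n}(u)\to u_{T}^{\ast}$.

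The final and most delicate step is to let $n\to\infty$ in $u\leq T^{n}(u)$ so as to conclude $u\leq u_{T}^{\ast}$. This limit passage is not merely formal: it relies on the order $\leq$ being compatible with the metric topology, i.e. on the graph $\{(x,y)\in X\times X : x\leq y\}$ being closed. That closedness is precisely the structural property encoded in the notion of ordered metric space employed here, and it is where the main care is required; once it is available, fixing the first coordinate at $u$ and taking the limit in the second coordinate of $u\leq T^{n}(u)$ immediately gives $u\leq u_{T}^{\ast}$, which completes the proof. (The dual implication $u\geq T(u)\Rightarrow u\geq u_{T}^{\ast}$, if needed, follows by the symmetric argument with a nonincreasing chain.)
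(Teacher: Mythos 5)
Your argument is correct and is the standard proof of this abstract Gronwall lemma: the paper gives no proof of its own (it simply cites \cite{WZ}), and your chain $u\leq T(u)\leq T^{2}(u)\leq\cdots\leq T^{n}(u)$ obtained by monotone induction, combined with the Picard convergence $T^{n}(u)\to u_{T}^{\ast}$ and a passage to the limit in the order relation, is exactly the argument in the cited source. You are also right to single out the only delicate point, namely that the limit passage requires the partial order to be closed in $X\times X$; this compatibility of $\leq$ with the metric topology is part of what is meant by an ordered metric space in this framework, so no gap remains.
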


\begin{lemma}
\label{tt} (\cite{GBP}) (Pachpatte's inequality). Let $x(t),$ $p(t)$ and $%
q(t)$ be nonnegative continuous functions defined on $%
%TCIMACRO{\U{211d} }%
%BeginExpansion
\mathbb{R}
%EndExpansion
^{+}$,and $\eta (t)$ be a positive and nondecreasing continuous function
defined on $%
%TCIMACRO{\U{211d} }%
%BeginExpansion
\mathbb{R}
%EndExpansion
^{+}$ for which the inequality%
\begin{equation*}
x(t)\leq \eta (t)+\int_{0}^{t}p(s)\left[ x(s)+\int_{0}^{s}q(\sigma )x(\sigma
)d\sigma \right] ds,
\end{equation*}%
holds for $t\in
%TCIMACRO{\U{211d} }%
%BeginExpansion
\mathbb{R}
%EndExpansion
^{+}$. Then%
\begin{equation}
x(t)\leq \eta (t)\left[ 1+\int_{0}^{t}p(s)\exp \int_{0}^{s}\left[ p(\sigma
)+q(\sigma )\right] d\sigma \right] ds,  \label{ht}
\end{equation}%
for $t\in
%TCIMACRO{\U{211d} }%
%BeginExpansion
\mathbb{R}
%EndExpansion
^{+}.$
\end{lemma}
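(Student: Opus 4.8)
The plan is to reduce the statement to the normalized case $\eta\equiv 1$ and then convert the nested (double) integral inequality into a first-order linear differential inequality to which ordinary Gronwall/integrating-factor reasoning applies. First I would exploit the monotonicity of $\eta$: since $\eta$ is positive and nondecreasing, for $0\le \sigma\le s\le t$ we have $\eta(\sigma)\le \eta(s)\le \eta(t)$, so dividing the hypothesis by $\eta(t)$ and using $x\ge 0$ gives, for the continuous function $w(t):=x(t)/\eta(t)$,
\[
w(t)\le 1+\int_{0}^{t}p(s)\left[\,w(s)+\int_{0}^{s}q(\sigma)w(\sigma)\,d\sigma\,\right]ds .
\]
This normalizes the problem, so it now suffices to bound $w$ and then multiply through by $\eta(t)$ at the end.

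Next I would linearize. Define the right-hand side
\[
v(t):=1+\int_{0}^{t}p(s)\left[\,w(s)+\int_{0}^{s}q(\sigma)w(\sigma)\,d\sigma\,\right]ds,
\]
so that $w(t)\le v(t)$ and $v(0)=1$. By continuity of $p,q,w$ the integrand is continuous, hence $v\in C^{1}$ and, replacing $w$ by the larger $v$ inside, $v'(t)=p(t)\bigl[w(t)+\int_{0}^{t}q(\sigma)w(\sigma)\,d\sigma\bigr]\le p(t)\bigl[v(t)+\int_{0}^{t}q(\sigma)v(\sigma)\,d\sigma\bigr]$. The nested integral is the crux of the argument; to absorb it I would introduce the auxiliary function $m(t):=v(t)+\int_{0}^{t}q(\sigma)v(\sigma)\,d\sigma$, which satisfies $m(0)=1$, $v\le m$ (the integral term being nonnegative), and, upon differentiating and using $v'\le p\,m$ together with $v\le m$,
\[
m'(t)=v'(t)+q(t)v(t)\le p(t)m(t)+q(t)m(t)=\bigl[p(t)+q(t)\bigr]m(t).
\]

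Finally, the standard integrating-factor estimate applied to $m'\le (p+q)m$ with $m(0)=1$ yields $m(t)\le \exp\bigl(\int_{0}^{t}[p(\sigma)+q(\sigma)]\,d\sigma\bigr)$. Substituting this into $v'(t)\le p(t)m(t)$ and integrating from $0$ to $t$ gives $v(t)\le 1+\int_{0}^{t}p(s)\exp\bigl(\int_{0}^{s}[p(\sigma)+q(\sigma)]\,d\sigma\bigr)ds$; since $w\le v$ and $x=\eta w$, multiplying by $\eta(t)$ recovers exactly (\ref{ht}). The only genuine obstacle is the double-integral term, and the key device that overcomes it is the auxiliary quantity $m$, which packages $v$ and its $q$-weighted integral into a single function obeying a clean linear differential inequality; everything else is the routine normalization by $\eta$ and the classical Gronwall bound.
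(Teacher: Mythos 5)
Your argument is correct. Note that the paper itself gives no proof of this lemma---it is quoted verbatim from Pachpatte's monograph \cite{GBP}---so there is nothing internal to compare against; your normalization by the nondecreasing $\eta$, followed by the introduction of the auxiliary function $m(t)=v(t)+\int_{0}^{t}q(\sigma)v(\sigma)\,d\sigma$ satisfying $m'\le (p+q)m$, is precisely the standard proof found in that reference, and every step (the $C^{1}$ regularity of $v$, the comparison $w\le v\le m$, and the integrating-factor bound) is justified.
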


\textbf{Lemma 2.7} \label{lem2.7} Let $f:(0,b]\times \mathbb{R}\times
\mathbb{R}\times \mathbb{R}\rightarrow \mathbb{R}$ be a continuous function.
Then the problem
\begin{equation*}
\begin{array}{c}
^{H}D_{0^{+}}^{\alpha ,\beta ;\psi }u(t)=F_{u,g,h}(s),\text{ }t\in (0,b] \\
I_{0^{+}}^{1-\gamma ;\psi }u(0)=u_{0},\qquad \ \qquad \qquad%
\end{array}%
\end{equation*}%
is equivalent to integral equation%
\begin{equation}
u(t)=\mathcal{H}_{\psi }^{\gamma }(t,0)u_{0}+\frac{1}{\Gamma (\alpha )}%
\int_{0}^{t}\mathcal{N}_{\psi }^{\alpha }(t,s)F_{u,g,h}(s)ds,  \label{equ 7}
\end{equation}%
where $\mathcal{H}_{\psi }^{\gamma }(t,0):=\frac{(\psi (t)-\psi (0))^{\gamma
-1}}{\Gamma (\gamma )}$, $\mathcal{N}_{\psi }^{\alpha }(t,s):=\psi ^{\prime
}(s)(\psi (t)-\psi (s))^{\alpha -1},$and
\begin{equation*}
F_{u,g,h}(s):=f\left( s,u(s),u(g(s)),\int_{0}^{s}h(s,\tau ,u(\tau ),u(g(\tau
))d\tau \right) .
\end{equation*}

\section{\textbf{Main results \label{Sec3}}}

In this section, we present results on the existence, uniqueness, and UHML
stability of solutions to the problem (\ref{equ 1}). First, we introduce the
following hypotheses:

\begin{description}
\item[(H$_{1}$)] $g:(0,b]\rightarrow \lbrack -r,0]$ is continuous function
with $g(t)\leq t$.

\item[(H$_{2}$)] $f:(0,b]\times \mathbb{R}\times \mathbb{R}\times \mathbb{R}%
\rightarrow \mathbb{R}$, $h:(0,b]\times (0,b]\times \mathbb{R}\rightarrow
\mathbb{R}$ are two continuous functions, and there exist $L_{F},L_{h}>0$
such that
\end{description}

\begin{equation*}
\left\vert f(t,u_{1},u_{2},u_{3})-f(t,v_{1},v_{2},v_{3})\right\vert \leq
L_{f}\text{ }\left[ \left\vert u_{1}-v_{1}\right\vert \text{ }+\left\vert
u_{2}-v_{2}\right\vert +\left\vert u_{3}-v_{3}\right\vert \right] ,
\end{equation*}%
\begin{equation*}
\left\vert h(t,u_{1},u_{2})-h(t,v_{1},v_{2})\right\vert \leq L_{h}\text{ }%
\left[ \left\vert u_{1}-v_{1}\right\vert \text{ }+\left\vert
u_{2}-v_{2}\right\vert \right] ,
\end{equation*}%
for\ all\ $t\in (0,b],$ $u_{i},v_{i}\in
%TCIMACRO{\U{211d} }%
%BeginExpansion
\mathbb{R}
%EndExpansion
,i=1,2,3.$

\begin{description}
\item[$($H$_{3})$] The\ following inequality holds%
\begin{equation*}
\Theta :=2L_{f}\left( \frac{\mathcal{B}(\gamma ,\alpha )}{\Gamma (\alpha )}+%
\frac{L_{h}}{\zeta \gamma }\frac{\mathcal{B}(\gamma +1,\alpha )}{\Gamma
(\alpha )}\right) \left[ \psi (b)-\psi (0)\right] ^{\alpha +1}<1,
\end{equation*}%
where $\mathcal{B}(\cdot ,\cdot )$ is a beta function and $\zeta =\sup_{s\in
(0,b]}\left\vert \psi ^{\prime }(s)\right\vert .$

Next, before starting and proving our results, we need to the following
remarks.
\end{description}

\begin{remark}
\label{re4.3}A function $v\in C_{1-\gamma ,\psi }[0,b]$ is a solution of the
inequality%
\begin{equation}
\left\vert ^{H}D_{0^{+}}^{\alpha ,\beta ,\psi }v(t)-F_{v,g,h}(t)\right\vert
\leq \varepsilon E_{\alpha }(\psi (t)-\psi (0))^{\alpha },\ t\in (0,b],
\label{equ 5}
\end{equation}%
if and only if there exists a function $\eta _{v}\in C_{1-\gamma ,\psi
}[0,b] $ such that
\end{remark}

\begin{description}
\item[(i)] $\left\vert \eta _{v}(t)\right\vert \leq \varepsilon E_{\alpha
}((\psi (t)-\psi (0))^{\alpha }),\ \ t\in (0,b];$

\item[(ii)] $^{H}D_{0^{+}}^{\alpha ,\beta ,\psi }v(t)=F_{v,g,h}(t)+\eta
_{v}(t),$ $\ t\in (0,b],$ where
\begin{equation*}
F_{v,g,h}(t):=f\left( t,v(t),v(g(t)),\int_{0}^{t}h(t,s,v(s),v(g(s))ds\right)
.
\end{equation*}
\end{description}

\begin{definition}
\textbf{\label{de4.2}} Problem (\ref{equ 1}) is UHML stable with respect to $%
E_{\alpha }((\psi (t)-\psi (0))^{\alpha })$ if there exists $C_{_{E_{\alpha
}}}>0$ such that, for each $\varepsilon >0$ and for each solution $v\in
C[-r,b]$ to the inequality (\ref{equ 5}), there exists a solution $u\in $ $%
C[-r,b]$ to first equation of (\ref{equ 1}) with
\end{definition}

\begin{equation*}
\left\vert v(t)-u(t)\right\vert \leq C_{_{E_{\alpha }}}\varepsilon E_{\alpha
}((\psi (t)-\psi (0))^{\alpha }),\ \ t\in \left[ -r,b\right] .
\end{equation*}%
By Lemma \ref{lem2.7} and above remark, for $t\in (0,b]$ we have
\begin{eqnarray*}
v(t) &=&\mathcal{H}_{\psi }^{\gamma }(t,0)u_{0}+\frac{1}{\Gamma (\alpha )}%
\int_{0}^{t}\mathcal{N}_{\psi }^{\alpha }(t,s)F_{v,g,h}(s)ds \\
&&+\frac{1}{\Gamma (\alpha )}\int_{0}^{t}\mathcal{N}_{\psi }^{\alpha
}(t,s)\eta _{v}(s)ds.
\end{eqnarray*}

\begin{remark}
\label{h1} Let $v\in C_{1-\gamma ,\psi }[0,b]$ be a solution of the
inequality (\ref{equ 5}). Then $v$ is a solution of the following integral
inequality%
\begin{eqnarray*}
\left\vert v(t)-\mathcal{H}_{\psi }^{\gamma }(t,0)u_{0}-\frac{1}{\Gamma
(\alpha )}\int_{0}^{t}\mathcal{N}_{\psi }^{\alpha
}(t,s)F_{v,g,h}(s)ds\right\vert  &\leq &\frac{1}{\Gamma (\alpha )}%
\int_{0}^{t}\mathcal{N}_{\psi }^{\alpha }(t,s)\left\vert \eta
_{v}(s)\right\vert ds \\
&\leq &\frac{\varepsilon }{\Gamma (\alpha )}\int_{0}^{t}\mathcal{N}_{\psi
}^{\alpha }(t,s)E_{\alpha }((\psi (s)-\psi (0))^{\alpha })ds \\
&=&\varepsilon \sum\limits_{k=0}^{\infty }\frac{1}{\Gamma ((k+1)\alpha +1)}%
I_{0^{+}}^{\alpha ;\psi }\left[ \psi (s)-\psi (0)\right] ^{\alpha k} \\
&=&\varepsilon \sum\limits_{k=0}^{\infty }\frac{\left[ \psi (t)-\psi (0)%
\right] ^{\alpha (k+1)}}{\Gamma ((k+1)\alpha +1)} \\
&\leq &\varepsilon \sum\limits_{k=0}^{\infty }\frac{\left( \left[ \psi
(t)-\psi (0)\right] ^{\alpha }\right) ^{n}}{\Gamma (n\alpha +1)} \\
&=&\varepsilon E_{\alpha }(\left[ \psi (t)-\psi (0)\right] ^{\alpha }).
\end{eqnarray*}%
Now, we are ready to prove our results on the problem (\ref{equ 1}).
\end{remark}

\begin{theorem}
\label{Th3.1} Assume that (H$_{1}$)-(H$_{3}$) are fulfilled. Then

\begin{enumerate}
\item The $\psi -$Hilfer problem (\ref{equ 1}) has a unique solution in $%
C[-r,b]\cap C_{1-\gamma ;\psi }[0,b]$.

\item The first equation of (\ref{equ 1}) is UHML stable.
\end{enumerate}
\end{theorem}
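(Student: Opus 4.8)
The plan is to prove the two parts separately, using the integral reformulation from Lemma 2.7 as the common foundation. For part~(1), existence and uniqueness, I would define the Picard operator $T:C[-r,b]\cap C_{1-\gamma;\psi}[0,b]\to C[-r,b]\cap C_{1-\gamma;\psi}[0,b]$ by
\begin{equation*}
(Tu)(t)=
\begin{cases}
\varphi(t), & t\in[-r,0],\\[1mm]
\mathcal{H}_{\psi}^{\gamma}(t,0)u_{0}+\dfrac{1}{\Gamma(\alpha)}\displaystyle\int_{0}^{t}\mathcal{N}_{\psi}^{\alpha}(t,s)F_{u,g,h}(s)\,ds, & t\in(0,b],
\end{cases}
\end{equation*}
so that by Lemma 2.7 the fixed points of $T$ are exactly the solutions of~(\ref{equ 1}). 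First I would check that $T$ is well defined, i.e. that it maps the weighted space into itself; this uses continuity of $f,h,g,\psi$ together with Theorem~\ref{th2.3a} to control the weight $[\psi(t)-\psi(0)]^{1-\gamma}$ against the kernel $\mathcal{N}_{\psi}^{\alpha}$.

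The heart of part~(1) is the contraction estimate. Given $u,v$, I would bound $\bigl|(Tu)(t)-(Tv)(t)\bigr|$ by applying the Lipschitz hypothesis (H$_2$) to $f$, which splits into three terms; the third term carries the inner integral, to which I apply the Lipschitz bound on $h$. Multiplying through by the weight $[\psi(t)-\psi(0)]^{1-\gamma}$ and evaluating the resulting $\psi$-integrals via Theorem~\ref{th2.3a}, the Beta-function factors $\mathcal{B}(\gamma,\alpha)$ and $\mathcal{B}(\gamma+1,\alpha)$ appear exactly as in hypothesis (H$_3$). The computation should yield $\|Tu-Tv\|_{C_{1-\gamma;\psi}}\le\Theta\,\|u-v\|_{C_{1-\gamma;\psi}}$ with the same $\Theta$ defined in (H$_3$); since $\Theta<1$, the Banach fixed point theorem gives a unique fixed point, which is the unique solution, and simultaneously shows $T$ is a Picard operator in the sense of Definition~\ref{de2.5}. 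I expect the bookkeeping of the weighted norm through the double integral to be the most delicate part — keeping track of which Beta function attaches to the $h$-term versus the direct $f$-term, and verifying that the factor $\tfrac{1}{\zeta\gamma}$ emerges correctly from bounding $\psi'$ and the extra $\psi$-power.

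For part~(2), UHML stability, I would start from Remark~\ref{h1}, which already records that any solution $v$ of the inequality~(\ref{equ 5}) satisfies the integral inequality with defect bounded by $\varepsilon E_{\alpha}([\psi(t)-\psi(0)]^{\alpha})$. Letting $u$ be the exact solution from part~(1) with the same initial data, I would subtract the two integral representations and again apply (H$_2$) to $f$ and $h$, giving
\begin{equation*}
\bigl|v(t)-u(t)\bigr|\le \varepsilon E_{\alpha}\bigl([\psi(t)-\psi(0)]^{\alpha}\bigr)
+\frac{L_f}{\Gamma(\alpha)}\int_{0}^{t}\mathcal{N}_{\psi}^{\alpha}(t,s)\Bigl[\,|v(s)-u(s)|+\text{(delay and integral terms)}\Bigr]ds.
\end{equation*}
The delayed terms $|v(g(s))-u(g(s))|$ must be handled using $g(t)\le t$ so that they are dominated by the sup over $[0,s]$, and the inner $h$-integral contributes the double-integral structure. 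This puts the estimate precisely into the hypothesis of Pachpatte's inequality (Lemma~\ref{tt}) with $\eta(t)=\varepsilon E_{\alpha}([\psi(t)-\psi(0)]^{\alpha})$, which is positive and nondecreasing. Applying~(\ref{ht}) then yields $|v(t)-u(t)|\le C_{E_{\alpha}}\,\varepsilon E_{\alpha}([\psi(t)-\psi(0)]^{\alpha})$, where $C_{E_{\alpha}}$ absorbs the exponential-integral factor; on $[-r,0]$ both functions equal $\varphi$ so the bound holds trivially. The main obstacle here will be massaging the $\psi$-weighted kernel and the delay substitution into the exact nonnegative-continuous-function format that Lemma~\ref{tt} demands, and confirming that the resulting constant is genuinely independent of $\varepsilon$ and $t$.
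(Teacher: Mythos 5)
Your part (1) follows the paper's argument essentially verbatim: the same solution operator $\mathcal{G}_f$ on $C[-r,b]$, the same weighted contraction estimate producing the constant $\Theta$ of (H$_3$) via Theorem \ref{th2.3a} and the Beta function, and the Banach fixed point theorem. The genuine gap is in part (2), in how you dispose of the delayed term before invoking Pachpatte's inequality. Lemma \ref{tt} requires an inequality of the exact form $x(t)\le\eta(t)+\int_0^t p(s)\left[x(s)+\int_0^s q(\sigma)x(\sigma)d\sigma\right]ds$ with no retarded argument, whereas the estimate (\ref{w1}) for $z=|v-u|$ contains $z(g(s))$. Your proposed fix --- dominating $z(g(s))$ by the supremum of $z$ over $[0,s]$ --- does not put you back into the hypotheses of Lemma \ref{tt}: to pass to the running maximum one needs the right-hand side of the inequality to be nondecreasing in $t$, and with the singular kernel $\mathcal{N}_{\psi}^{\alpha}(t,s)=\psi'(s)(\psi(t)-\psi(s))^{\alpha-1}$, $\alpha<1$, the integrand decreases in $t$ while the domain grows, so that monotonicity is precisely what is not available for free. (Note also that $g$ maps $(0,b]$ into $[-r,0]$, so the supremum over $[0,s]$ does not even reach the point $g(s)$.)

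The paper closes this gap with a mechanism your proposal omits entirely: it introduces the majorant operator $\mathcal{T}$ on $C([-r,b],\mathbb{R}^{+})$ built from the right-hand side of (\ref{w1}), shows via (H$_3$) that $\mathcal{T}$ is a contraction and hence a Picard operator with fixed point $z^{\ast}$, proves directly from the fixed-point equation (using positivity of all terms) that $z^{\ast}$ is increasing, so that $z^{\ast}(g(t))\le z^{\ast}(t)$ and Pachpatte's inequality becomes applicable to $z^{\ast}$ rather than to $|v-u|$ itself; finally the abstract Gronwall lemma for increasing Picard operators (Lemma \ref{le2.6}) converts $|v-u|\le\mathcal{T}|v-u|$ into $|v-u|\le z^{\ast}$, which yields the constant $C_{E_{\alpha}}$. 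Without this Picard-operator comparison step (or, alternatively, the explicit observation that since $u\equiv v$ on $[-r,0]$ and $g$ takes values there, the delayed differences vanish identically), your direct application of Lemma \ref{tt} to $|v-u|$ is not justified.
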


\begin{proof}
(1) In view of Lemma \ref{lem2.7}, we get that (\ref{equ 1}) is equivalent
to the following system%
\begin{equation}
u(t)=\left\{
\begin{array}{c}
\mathcal{H}_{\psi }^{\gamma }(t,0)u_{0}+\frac{1}{\Gamma (\alpha )}%
\int_{0}^{t}\mathcal{N}_{\psi }^{\alpha }(t,s)F_{u,g,h}(s)ds,\ \ t\in
\lbrack 0,b], \\
\varphi (t)\qquad \ \qquad \qquad \qquad \qquad \qquad \ \ \qquad t\in
\lbrack -r,0].%
\end{array}%
\right.  \label{eq3}
\end{equation}%
where $\mathcal{H}_{\psi }^{\gamma }(t,0):=\frac{(\psi (t)-\psi (0))^{\gamma
-1}}{\Gamma (\gamma )}$, $\mathcal{N}_{\psi }^{\alpha }(t,s):=\psi ^{\prime
}(s)(\psi (t)-\psi (s))^{\alpha -1},$and
\begin{equation}
F_{u,g,h}(s):=f\left( s,u(s),u(g(s)),\int_{0}^{s}h(s,\tau ,u(\tau ),u(g(\tau
))d\tau \right) .  \label{p1}
\end{equation}

The existence of a solution for the problem (\ref{equ 1}) can be transformed
into a fixed point problem in $C[-r,b]$ for the operator $\mathcal{G}%
_{f}:C[-r,b]\longrightarrow C[-r,b]$\ defined by
\begin{equation}
\mathcal{G}_{f}u(t)=\left\{
\begin{array}{c}
\mathcal{H}_{\psi }^{\gamma }(t,0)u_{0}+\frac{1}{\Gamma (\alpha )}%
\int_{0}^{t}\mathcal{N}_{\psi }^{\alpha }(t,s)F_{u,g,h}(s)ds,\ \ t\in
\lbrack 0,b], \\
\varphi (t)\qquad \ \qquad \qquad \qquad \qquad \qquad \ \ \qquad t\in
\lbrack -r,0].%
\end{array}%
\right.  \label{equ9}
\end{equation}%
We remark that for any continuous function $F_{u,g,h},$ the operator $%
\mathcal{G}_{f}\mathcal{\ }$is also continuous. Indeed,

Case 1. For all $t,t+\epsilon \in (0,b],$ we have%
\begin{eqnarray*}
\left\vert \mathcal{G}_{f}u(t+\epsilon )-\mathcal{G}_{f}u(t)\right\vert
&=&\left\vert \mathcal{H}_{\psi }^{\gamma }(t+\epsilon ,0)u_{0}+\frac{1}{%
\Gamma (\alpha )}\int_{0}^{t+\epsilon }\mathcal{N}_{\psi }^{\alpha
}(t+\epsilon ,s)F_{u,g,h}(s)ds\right. \\
&&\left. -\mathcal{H}_{\psi }^{\gamma }(t,0)u_{0}-\frac{1}{\Gamma (\alpha )}%
\int_{0}^{t}\mathcal{N}_{\psi }^{\alpha }(t,s)F_{u,g,h}(s)ds\right\vert \\
&\rightarrow &0\text{ as }\epsilon \longrightarrow 0.
\end{eqnarray*}%
Case 2. For all $t,t+\epsilon \in C[-r,0],$ we have%
\begin{equation*}
\left\vert \mathcal{G}_{f}u(t+\epsilon )-\mathcal{G}_{f}u(t)\right\vert
=\left\vert \varphi (t+\epsilon )-\varphi (t)\right\vert \longrightarrow 0,%
\text{ as }\epsilon \longrightarrow 0.
\end{equation*}%
Next, we show that $\mathcal{G}_{f}:C[-r,b]\rightarrow C[-r,b]$ defined by (%
\ref{equ9}) is a contraction mapping on $C[-r,b]$\ with respect to\ the
weighted norm $\left\Vert \cdot \right\Vert _{C_{1-\gamma ;\psi }}.$

Case 1. For all $t\in \lbrack -r,0],$and for each $u,v\in C\left[ -r,b\right]
,$ we have%
\begin{equation*}
\left\vert \mathcal{G}_{f}u(t)-\mathcal{G}_{f}v(t)\right\vert =0.
\end{equation*}%
Case 2. From our assumption (\ref{p1}), and for each $t\in (0,b],$ $u,v\in
C_{1-\gamma ,\psi }\left[ 0,b\right] ,$ we have%
\begin{eqnarray}
\left\vert F_{u,g,h}(s)-F_{v,g,h}(s)\right\vert &=&\left\vert f\left(
s,u(s),u(g(s)),\int_{0}^{s}h(s,\tau ,u(\tau ),u(g(\tau ))d\tau \right)
\right.  \notag \\
&&\left. -f\left( s,v(s),v(g(s)),\int_{0}^{s}h(s,\tau ,v(\tau ),v(g(\tau
))d\tau \right) \right\vert  \notag \\
&\leq &L_{f}\left[
\begin{array}{c}
\left\vert u(s)-v(s)\right\vert \text{ }+\left\vert u(g(s)-v(g(s)\right\vert
\qquad \qquad \qquad \qquad \\
+\int_{0}^{s}\left\vert h(s,\tau ,u(\tau ),u(g(\tau ))-h(s,\tau ,v(\tau
),v(g(\tau ))\right\vert d\tau%
\end{array}%
\right]  \notag \\
&\leq &2L_{f}\left[ \left[ \psi (s)-\psi (0)\right] ^{\gamma -1}\left\Vert
u-v\right\Vert _{C_{1-\gamma ;\psi }\left[ 0,b\right] }\right]  \notag \\
&&+2L_{f}L_{h}\int_{0}^{s}\text{ }\left[ \left[ \psi (\tau )-\psi (0)\right]
^{\gamma -1}\left\Vert u-v\right\Vert _{C_{1-\gamma ;\psi }\left[ 0,b\right]
}\right] d\tau .  \label{s1}
\end{eqnarray}

Since $\psi \in C^{1}[0,b],$ there exists a constant $\zeta \neq 0$ such
that $\underset{\tau \in (0,b]}{sup}\left\vert \psi ^{\prime }(\tau
)\right\vert \leq \zeta .$ Therefore%
\begin{eqnarray}
&&\int_{0}^{s}\text{ }\left[ \left[ \psi (\tau )-\psi (0)\right] ^{\gamma
-1}\left\Vert u-v\right\Vert _{C_{1-\gamma ;\psi }\left[ 0,b\right] }\right]
d\tau   \notag \\
&=&\int_{0}^{s}\text{ }\left[ \psi ^{\prime }(\tau )\left[ \psi (\tau )-\psi
(0)\right] ^{\gamma -1}\left[ \psi ^{\prime }(\tau )\right] ^{-1}\left\Vert
u-v\right\Vert _{C_{1-\gamma ;\psi }\left[ 0,b\right] }\right] d\tau   \notag
\\
&\leq &\frac{1}{\zeta }\int_{0}^{s}\text{ }\left[ \psi ^{\prime }(\tau )%
\left[ \psi (\tau )-\psi (0)\right] ^{\gamma -1}\left\Vert u-v\right\Vert
_{C_{1-\gamma ;\psi }\left[ 0,b\right] }\right] d\tau   \notag \\
&=&\frac{1}{\zeta \gamma }\text{ }\left[ \left[ \psi (s)-\psi (0)\right]
^{\gamma }\left\Vert u-v\right\Vert _{C_{1-\gamma ;\psi }\left[ 0,b\right] }%
\right] .  \label{s2}
\end{eqnarray}%
The equations (\ref{s1}) and (\ref{s2}), gives%
\begin{eqnarray*}
\left\vert F_{u,g,h}(s)-F_{v,g,h}(s)\right\vert  &\leq &2L_{f}\left[ \left[
\psi (s)-\psi (0)\right] ^{\gamma -1}\left\Vert u-v\right\Vert _{C_{1-\gamma
;\psi }\left[ 0,b\right] }\right]  \\
&&+2L_{f}L_{h}\frac{1}{\zeta \gamma }\text{ }\left[ \left[ \psi (s)-\psi (0)%
\right] ^{\gamma }\left\Vert u-v\right\Vert _{C_{1-\gamma ;\psi }\left[ 0,b%
\right] }\right] .
\end{eqnarray*}%
Consequently,
\begin{eqnarray*}
\left\vert \mathcal{G}_{f}u(t)-\mathcal{G}_{f}v(t)\right\vert  &\leq &\frac{%
2L_{f}}{\Gamma (\alpha )}\int_{0}^{t}\mathcal{N}_{\psi }^{\alpha }(t,s)\left[
\left[ \psi (s)-\psi (0)\right] ^{\gamma -1}\left\Vert u-v\right\Vert
_{C_{1-\gamma ;\psi }\left[ 0,b\right] }\right] ds \\
&&+\frac{2L_{f}L_{h}}{\zeta \gamma }\frac{1}{\Gamma (\alpha )}\int_{0}^{t}%
\mathcal{N}_{\psi }^{\alpha }(t,s)\text{ }\left[ \left[ \psi (s)-\psi (0)%
\right] ^{\gamma }\left\Vert u-v\right\Vert _{C_{1-\gamma ;\psi }\left[ 0,b%
\right] }\right] ds \\
&=&2L_{f}\left\Vert u-v\right\Vert _{C_{1-\gamma ;\psi }\left[ 0,b\right]
}I_{0^{+}}^{\alpha ;\psi }\left[ \psi (t)-\psi (0)\right] ^{\gamma -1} \\
&&+\frac{2L_{f}L_{h}}{\zeta \gamma }\left\Vert u-v\right\Vert _{C_{1-\gamma
;\psi }\left[ 0,b\right] }I_{0^{+}}^{\alpha ;\psi }\left[ \psi (t)-\psi (0)%
\right] ^{\gamma } \\
&=&2L_{f}\left\Vert u-v\right\Vert _{C_{1-\gamma ;\psi }\left[ 0,b\right] }%
\frac{\Gamma (\gamma )}{\Gamma (\gamma +\alpha )}\left[ \psi (t)-\psi (0)%
\right] ^{\alpha +\gamma -1} \\
&&+\frac{2L_{f}L_{h}}{\zeta \gamma }\left\Vert u-v\right\Vert _{C_{1-\gamma
;\psi }\left[ 0,b\right] }\frac{\Gamma (\gamma +1)}{\Gamma (\gamma +\alpha
+1)}\left[ \psi (t)-\psi (0)\right] ^{\alpha +\gamma }.
\end{eqnarray*}

From the definition of beta function, it follows that
\begin{eqnarray}
\left\Vert \mathcal{G}_{f}u-\mathcal{G}_{f}v\right\Vert _{C_{1-\gamma ;\psi }%
\left[ 0,b\right] } &\leq &2L_{f}\left( \frac{\mathcal{B}(\gamma ,\alpha )}{%
\Gamma (\alpha )}+\frac{L_{h}}{b\gamma }\frac{\mathcal{B}(\gamma +1,\alpha )%
}{\Gamma (\alpha )}\right)  \notag \\
&&\times \left[ \psi (b)-\psi (0)\right] ^{\alpha +1}\left\Vert
u-v\right\Vert _{C_{1-\gamma ;\psi }\left[ 0,b\right] }.  \label{r2}
\end{eqnarray}%
The condition (H$_{3}$) shows that $\mathcal{G}_{f}$ is a contraction
mapping on $C[-r,b],$ via the norm\ $\left\Vert \cdot \right\Vert
_{C_{1-\gamma ;\psi }\left[ 0,b\right] }$. An application the Banach
contraction principle shows that the problem (\ref{equ 1}) has a unique
solution in $C[-r,b]\cap C_{1-\gamma ;\psi }[0,b].$

Now we prove our second claim (2). Let $\varepsilon >0,$ and let $v\in
C[-r,b]\cap C_{1-\gamma ;\psi }\left[ 0,b\right] $ be a function which
satisfies the inequality (\ref{equ 5}). We denote by $u\in C[-r,b]\cap
C_{1-\gamma ;\psi }\left[ 0,b\right] $ the unique solution to the problem%
\begin{equation*}
\left\{
\begin{array}{c}
^{H}D_{0^{+}}^{\alpha ,\beta ;\psi }u(t)=F_{u,g,h}(t),\text{ \ }t\in
(0,b],\qquad \ \ \ \  \\
I_{0^{+}}^{1-\gamma ;\psi }u(0^{+})=I_{0^{+}}^{1-\gamma ;\psi
}v(0^{+}),\qquad \qquad \ \ \ \ \ \  \\
u(t)=v(t),\qquad t\in \lbrack -r,0],\ 0<r<\infty ,%
\end{array}%
\right.
\end{equation*}

Now, by using our first claim (1),%
\begin{equation*}
u(t)=\left\{
\begin{array}{c}
v(t)\ ,\qquad \qquad \qquad \qquad \qquad \qquad \ \qquad \ t\in \left[ -r,0%
\right] , \\
\mathcal{H}_{\psi }^{\gamma }(t,0)u_{0}+\frac{1}{\Gamma (\alpha )}%
\int_{0}^{t}\mathcal{N}_{\psi }^{\alpha }(t,s)\mathcal{F}_{u}(s)ds,\text{ \ }%
t\in \left( 0,b\right] ,%
\end{array}%
\right.
\end{equation*}%
Obviously, for $t\in (0,b]$ the Remark \ref{h1} gives,
\begin{eqnarray}
&&\left\vert v(t)-\mathcal{H}_{\psi }^{\gamma }(t,0)u_{0}-\frac{1}{\Gamma
(\alpha )}\int_{0}^{t}\mathcal{N}_{\psi }^{\alpha
}(t,s)F_{v,g,h}(s)ds\right\vert  \notag \\
&\leq &\varepsilon E_{\alpha }(\left[ \psi (t)-\psi (0)\right] ^{\alpha }).
\label{equ10}
\end{eqnarray}%
Note that, for all$\ t\in \lbrack -r,0],$ $\left\vert v(t)-u(t)\right\vert
=0 $.

Now, for all $t\in (0,b]$, it follows from (H$_{2}$) and (\ref{equ10}) that%
\begin{eqnarray}
\left\vert v(t)-u(t)\right\vert &\leq &\left\vert v(t)-\mathcal{H}_{\psi
}^{\gamma }(t,0)u_{0}-\frac{1}{\Gamma (\alpha )}\int_{0}^{t}\mathcal{N}%
_{\psi }^{\alpha }(t,s)F_{v,g,h}(s)ds\right\vert  \notag \\
&&+\left\vert \frac{1}{\Gamma (\alpha )}\int_{0}^{t}\mathcal{N}_{\psi
}^{\alpha }(t,s)F_{v,g,h}(s)ds-\frac{1}{\Gamma (\alpha )}\int_{0}^{t}%
\mathcal{N}_{\psi }^{\alpha }(t,s)F_{u,g,h}(s)ds\right\vert  \notag \\
&\leq &\varepsilon E_{\alpha }(\left[ \psi (t)-\psi (0)\right] ^{\alpha })
\notag \\
&&+\frac{1}{\Gamma (\alpha )}\int_{0}^{t}\mathcal{N}_{\psi }^{\alpha
}(t,s)\left\vert F_{v,g,h}(s)-F_{u,g,h}(s)\right\vert ds  \notag \\
&\leq &\varepsilon E_{\alpha }(\left[ \psi (t)-\psi (0)\right] ^{\alpha })
\notag \\
&&+L_{f}\frac{1}{\Gamma (\alpha )}\int_{0}^{t}\mathcal{N}_{\psi }^{\alpha
}(t,s)\bigg\{\left\vert u(s)-v(s)\right\vert +\left\vert
u(g(s)-v(g(s)\right\vert  \notag \\
&&+L_{h}\int_{0}^{s}\big(\left\vert u(\tau )-v(\tau )\right\vert +\left\vert
u(g(\tau )-v(g(\tau )\right\vert \big)d\tau \bigg\}ds.  \label{w1}
\end{eqnarray}%
In view of (\ref{w1}), for $z\in C([-r,b],%
%TCIMACRO{\U{211d} }%
%BeginExpansion
\mathbb{R}
%EndExpansion
^{+})$ we consider the operator $\mathcal{T}:C([-r,b],%
%TCIMACRO{\U{211d} }%
%BeginExpansion
\mathbb{R}
%EndExpansion
^{+})\rightarrow C([-r,b],%
%TCIMACRO{\U{211d} }%
%BeginExpansion
\mathbb{R}
%EndExpansion
^{+})$ defined by%
\begin{equation*}
\mathcal{T}z(t)=0,\ \ \ t\in \left[ -r,0\right] ,
\end{equation*}%
\begin{eqnarray*}
\mathcal{T}z(t) &=&\varepsilon E_{\alpha }(\left[ \psi (t)-\psi (0)\right]
^{\alpha })+\frac{L_{f}}{\Gamma (\alpha )}\int_{0}^{t}\mathcal{N}_{\psi
}^{\alpha }(t,s)\bigg\{z(s)+z(g(s)) \\
&&+L_{h}\int_{0}^{s}z(\tau )+z(g(\tau ))d\tau \bigg\}ds,
\end{eqnarray*}%
for $t\in \left( 0,b\right] .$ We prove that $\mathcal{T}$\ is a Picard
operator.

Case 1. Observe first that for any $z,w\in C([-r,b],%
%TCIMACRO{\U{211d} }%
%BeginExpansion
\mathbb{R}
%EndExpansion
^{+}),$
\begin{equation*}
\left\vert \mathcal{T}z(t)-\mathcal{T}w(t)\right\vert =0,\ \ \ t\in \left[
-r,0\right] ,
\end{equation*}%
Case 2. For each $t\in \left( 0,b\right] $ and $z,w\in C_{1-\gamma ;\psi }%
\left[ 0,b\right] ,$ it follows from ($H_{2}$) that%
\begin{eqnarray*}
\left\vert \mathcal{T}z(t)-\mathcal{T}w(t)\right\vert  &\leq &\frac{L_{f}}{%
\Gamma (\alpha )}\int_{0}^{t}\mathcal{N}_{\psi }^{\alpha }(t,s)\bigg\{%
\left\vert z(s)-w(s)\right\vert +\left\vert z(g(s))-w(g(s))\right\vert  \\
&&+L_{h}\int_{0}^{s}\big(\left\vert z(\tau )-w(\tau )\right\vert +\left\vert
z(g(\tau ))-w(g(\tau ))\right\vert \big)d\tau \bigg\}ds \\
&\leq &\frac{2L_{f}}{\Gamma (\alpha )}\int_{0}^{t}\mathcal{N}_{\psi
}^{\alpha }(t,s)\left[ \left[ \psi (s)-\psi (0)\right] ^{\gamma
-1}\left\Vert u-v\right\Vert _{C_{1-\gamma ;\psi }\left[ 0,b\right] }\right]
ds \\
&&+\frac{2L_{f}L_{h}}{\zeta \gamma }\frac{1}{\Gamma (\alpha )}\int_{0}^{t}%
\mathcal{N}_{\psi }^{\alpha }(t,s)\text{ }\left[ \left[ \psi (s)-\psi (0)%
\right] ^{\gamma }\left\Vert u-v\right\Vert _{C_{1-\gamma ;\psi }\left[ 0,b%
\right] }\right] ds \\
&=&2L_{f}\left( \frac{\mathcal{B}(\gamma ,\alpha )}{\Gamma (\alpha )}+\frac{%
L_{h}}{\zeta \gamma }\frac{\mathcal{B}(\gamma +1,\alpha )}{\Gamma (\alpha )}%
\right) \left[ \psi (t)-\psi (0)\right] ^{\alpha +\gamma }\left\Vert
u-v\right\Vert _{C_{1-\gamma ;\psi }\left[ 0,b\right] }.
\end{eqnarray*}%
Then we obtain%
\begin{eqnarray*}
\left\Vert \mathcal{T}z-\mathcal{T}w\right\Vert _{C_{1-\gamma ,\psi \lbrack
0,b]}} &\leq &2L_{f}\left( \frac{\mathcal{B}(\gamma ,\alpha )}{\Gamma
(\alpha )}+\frac{L_{h}}{b\gamma }\frac{\mathcal{B}(\gamma +1,\alpha )}{%
\Gamma (\alpha )}\right)  \\
&&\times \left[ \psi (b)-\psi (0)\right] ^{\alpha +1}\left\Vert
u-v\right\Vert _{C_{1-\gamma ;\psi }\left[ 0,b\right] },
\end{eqnarray*}%
By (H$_{3}$), $\mathcal{T}$ is a contraction mapping on $C([-r,b],%
%TCIMACRO{\U{211d} }%
%BeginExpansion
\mathbb{R}
%EndExpansion
^{+})$ via the wieghted norm\ $\left\Vert .\right\Vert _{C_{1-\gamma ;\psi }%
\left[ 0,b\right] }.$ Applying the Banach contraction principle to $\mathcal{%
T}$, we see that $\mathcal{T}$ is a Picard operator and $F_{\mathcal{T}%
}=\{z^{\ast }\}$. Then, for all$\ t\in (0,b]$, we have%
\begin{eqnarray*}
z^{\ast }(t) &=&\mathcal{T}z^{\ast }(t)) \\
&=&\varepsilon E_{\alpha }(\left[ \psi (t)-\psi (0)\right] ^{\alpha })+\frac{%
L_{f}}{\Gamma (\alpha )}\int_{0}^{t}\mathcal{N}_{\psi }^{\alpha }(t,s)\bigg\{%
z^{\ast }(s)+z^{\ast }(g(s)) \\
&&+L_{h}\int_{0}^{s}z^{\ast }(\tau )+z^{\ast }(g(\tau ))d\tau \bigg\}ds,
\end{eqnarray*}%
Next, we prove that the solution $z^{\ast }$ is increasing. Let $\sigma
:=\min_{s\in \lbrack 0,b]}[z^{\ast }(s)+z^{\ast }(h(s))]\in
%TCIMACRO{\U{211d} }%
%BeginExpansion
\mathbb{R}
%EndExpansion
_{+}.$Then for all $0<t_{1}<t_{2}\leq b$, we have%
\begin{eqnarray*}
z^{\ast }(t_{2})-z^{\ast }(t_{1}) &=&\varepsilon E_{\alpha }(\left[ \psi
(t_{2})-\psi (0)\right] ^{\alpha })-\varepsilon E_{\alpha }(\left[ \psi
(t_{1})-\psi (0)\right] ^{\alpha }) \\
&&+\frac{L_{f}}{\Gamma (\alpha )}\int_{0}^{t_{1}}\big(\mathcal{N}_{\psi
}^{\alpha }(t_{2},s)-\mathcal{N}_{\psi }^{\alpha }(t_{1},s)\big)\bigg\{%
z^{\ast }(s)+z^{\ast }(g(s)) \\
&&+L_{h}\int_{0}^{s}z^{\ast }(\tau )+z^{\ast }(g(\tau ))d\tau \bigg\}ds, \\
&&+\frac{L_{f}}{\Gamma (\alpha )}\int_{t_{1}}^{t_{2}}\mathcal{N}_{\psi
}^{\alpha }(t_{2},s)\bigg\{z^{\ast }(s)+z^{\ast }(g(s)) \\
&&+L_{h}\int_{0}^{s}z^{\ast }(\tau )+z^{\ast }(g(\tau ))d\tau \bigg\}ds \\
&\geq &\varepsilon E_{\alpha }(\left[ \psi (t_{2})-\psi (0)\right] ^{\alpha
})-\varepsilon E_{\alpha }(\left[ \psi (t_{1})-\psi (0)\right] ^{\alpha }) \\
&&+\frac{L_{f}}{\Gamma (\alpha )}\int_{0}^{t_{1}}\big(\mathcal{N}_{\psi
}^{\alpha }(t_{2},s)-\mathcal{N}_{\psi }^{\alpha }(t_{1},s)\big)\sigma \big(%
1+L_{h}s\big)ds \\
&&+\frac{L_{f}}{\Gamma (\alpha )}\int_{t_{1}}^{t_{2}}\mathcal{N}_{\psi
}^{\alpha }(t_{2},s)\big)\sigma \big(1+L_{h}s\big)ds \\
&=&\varepsilon E_{\alpha }(\left[ \psi (t_{2})-\psi (0)\right] ^{\alpha
})-\varepsilon E_{\alpha }(\left[ \psi (t_{1})-\psi (0)\right] ^{\alpha }) \\
&&+\frac{\sigma L_{f}}{\Gamma (\alpha +1)}\left[ (\psi (t_{2})-\psi
(0))^{\alpha }-\psi (t_{1})-\psi (0))^{\alpha }\right]  \\
&&+\frac{\sigma L_{f}L_{h}}{\Gamma (\alpha +2)\zeta }\left[ (\psi
(t_{2})-\psi (0))^{\alpha +1}-(\psi (t_{1})-\psi (0))^{\alpha +1}\right]  \\
&>&0,
\end{eqnarray*}%
Therefore, $u^{\ast }$ is increasing, so $z^{\ast }(g(t))\leq z^{\ast }(t)\ $%
due to $g(t)\leq t$ and%
\begin{eqnarray*}
z^{\ast }(t) &\leq &\varepsilon E_{\alpha }(\left[ \psi (t)-\psi (0)\right]
^{\alpha }) \\
&&+\int_{0}^{t}\frac{2L_{f}}{\Gamma (\alpha )}\mathcal{N}_{\psi }^{\alpha
}(t,s)\bigg\{z^{\ast }(s)+\int_{0}^{s}L_{h}z^{\ast }(\tau )d\tau \bigg\}ds.
\end{eqnarray*}

Applying Pachpatte's inequality given in the Lemma \ref{tt} to the
inequality (\ref{ht}) with $x(t)=z^{\ast }(t),$ $\eta (t)=\varepsilon
E_{\alpha }(\left[ \psi (t)-\psi (0)\right] ^{\alpha }),$ $p(s)=\frac{2L_{f}%
}{\Gamma (\alpha )}\mathcal{N}_{\psi }^{\alpha }(t,s)$ and $q(\sigma )=L_{h},
$ we obtain%
\begin{eqnarray*}
z^{\ast }(t) &\leq &\varepsilon E_{\alpha }(\left[ \psi (t)-\psi (0)\right]
^{\alpha }) \\
&&\times \bigg{(}1+\int_{0}^{t}\frac{2L_{f}}{\Gamma (\alpha )}\mathcal{N}%
_{\psi }^{\alpha }(t,s)\exp \left\{ \int_{0}^{s}\bigg{(}\frac{2L_{f}}{\Gamma
(\alpha )}\mathcal{N}_{\psi }^{\alpha }(s,\tau )+L_{h}\bigg{)}d\tau \right\}
ds\bigg{)} \\
&\leq &\varepsilon E_{\alpha }(\left[ \psi (t)-\psi (0)\right] ^{\alpha }) \\
&&\times \bigg{(}1+\int_{0}^{t}\frac{2L_{f}}{\Gamma (\alpha )}\mathcal{N}%
_{\psi }^{\alpha }(t,s)\exp \left\{ \frac{2L_{f}}{\Gamma (\alpha +1)}(\psi
(s)-\psi (0))^{\alpha }+L_{h}s\right\} ds\bigg{)}.
\end{eqnarray*}

By Lagrange Mean value theorem, there exist $c\in (0,s]$ such that $(\psi
(s)-\psi (0))=s\psi ^{\prime }(c),$ it follows from fact that $\psi \in
C^{1}[0,b],$ there exists a constant $\kappa \neq 0$ such that $\sup_{\tau
\in (0,c]}\left\vert \psi ^{\prime }(\tau )\right\vert =\kappa ,$ and we
have from $0<\alpha <1,$ that%
\begin{equation*}
\left[ \psi (s)-\psi (0)\right] ^{\alpha }<\left[ \psi (s)-\psi (0)\right] ,%
\text{ and }s=\frac{1}{\kappa }\left[ \psi (s)-\psi (0)\right] .
\end{equation*}

This gives%
\begin{eqnarray*}
z^{\ast }(t) &\leq &\varepsilon E_{\alpha }(\left[ \psi (t)-\psi (0)\right]
^{\alpha }) \\
&&\times \bigg{(}1+\int_{0}^{t}\frac{2L_{f}}{\Gamma (\alpha )}\mathcal{N}%
_{\psi }^{\alpha }(t,s)\exp \left\{ \left( \frac{2L_{f}}{\Gamma (\alpha +1)}+%
\frac{L_{h}}{\kappa }\right) \left[ \psi (s)-\psi (0)\right] \right\} ds%
\bigg{)} \\
&=&\varepsilon E_{\alpha }(\left[ \psi (t)-\psi (0)\right] ^{\alpha })%
\bigg{(}1+2L_{f}I_{0^{+}}^{\alpha ;\psi }e^{A\left[ \psi (t)-\psi (0)\right]
}\bigg{)} \\
&\leq &\varepsilon E_{\alpha }(\left[ \psi (t)-\psi (0)\right] ^{\alpha })%
\bigg{(}1+2L_{f}\left[ \psi (b)-\psi (0)\right] ^{\alpha }E_{1,\alpha +1}(A%
\left[ \psi (b)-\psi (0)\right] )\bigg{)}
\end{eqnarray*}%
where $A:=\left( \frac{2L_{f}}{\Gamma (\alpha +1)}+\frac{L_{h}}{\kappa }%
\right) .$ Take
\begin{equation*}
C_{E_{\alpha }}=\bigg{(}1+2L_{f}\left[ \psi (b)-\psi (0)\right] ^{\alpha
}E_{1,\alpha +1}(A\left[ \psi (b)-\psi (0)\right] )\bigg{)},
\end{equation*}%
we get%
\begin{equation*}
z^{\ast }(t)\leq C_{E_{\alpha }}\varepsilon E_{\alpha }(\left[ \psi (t)-\psi
(0)\right] ^{\alpha }),
\end{equation*}%
In particular, if $z=|v-u|$, from (\ref{w1}), $z\leq \mathcal{T}z$ and
applying the Lemma \ref{le2.6}, we obtain $z\leq z^{\ast }$, where $\mathcal{%
T}$ is an increasing Picard operator. As a result, we get%
\begin{equation*}
\left\vert v(t)-u(t)\right\vert \leq C_{E_{\alpha }}\varepsilon E_{\alpha }(
\left[ \psi (t)-\psi (0)\right] ^{\alpha }),\ \ \ t\in \left[ -r,b\right] .
\end{equation*}%
Thus, the first equation of (\ref{equ 1}) is UHML stable.
\end{proof}

Next, we use the Bielecki's norm

\begin{equation*}
\left\Vert \omega \right\Vert _{B}:=\underset{t\in \left[ a,b\right] }{\max }%
e^{-\delta \left[ \psi (t)-\psi (a)\right] }\left\vert \left[ \psi (t)-\psi
(a)\right] ^{1-\gamma }\omega (t)\right\vert ,\text{ }\delta \geq 0,
\end{equation*}%
where
\begin{equation*}
B=\left\{ \omega :(a,b]\rightarrow
%TCIMACRO{\U{211d} }%
%BeginExpansion
\mathbb{R}
%EndExpansion
;\text{ }e^{-\delta \left[ \psi (t)-\psi (a)\right] }\left[ \psi (t)-\psi (a)%
\right] ^{1-\gamma }\omega (t)\in C\left[ a,b\right] \right\} .
\end{equation*}

\begin{theorem}
\label{th3.3} Assume that $(H_{1})$- $(H_{3})$ are satisfied. If we have the
inequality
\begin{equation}
2L_{f}e^{\delta \left[ \psi (b)-\psi (0)\right] }\frac{\Gamma (\gamma )}{%
\Gamma (\gamma +\alpha )}\left( 1+\frac{L_{h}}{\zeta (\gamma +\alpha )}%
\right) \left[ \psi (b)-\psi (0)\right] ^{\alpha +1}<1.  \label{EE1}
\end{equation}

Then

\begin{enumerate}
\item The $\psi -$Hilfer problem (\ref{equ 1}) has a unique solution in $%
C[-r,b]\cap C_{1-\gamma ;\psi }[0,b]$.

\item The first equation of (\ref{equ 1}) is UHML stable.
\end{enumerate}

\begin{proof}
Just like the debate in Theorem \ref{Th3.1}, we only show that we show that $%
\mathcal{G}_{f}:C[-r,b]\rightarrow C[-r,b]$ defined by (\ref{equ9}) is a
contracting mapping on $C[-r,b]$\ with respect to\ the Bielecki's norm $%
\left\Vert \cdot \right\Vert _{B}.$ Since the procedure is standard, we only
present the main variation in the proof as follows:

For each $t\in \lbrack -r,0]$ and for each $u,v\in C\left[ -r,b\right] .$we
have%
\begin{equation*}
\left\vert \mathcal{G}_{f}(u)(t)-\mathcal{G}_{f}(v)(t)\right\vert =0.
\end{equation*}%
On the other hand, for each $u,v\in B$ and for all $t\in (0,b]$, we have%
\begin{eqnarray}
\left\vert \mathcal{G}_{f}(u)(t)-\mathcal{G}_{f}(v)(t)\right\vert  &\leq &%
\frac{1}{\Gamma (\alpha )}\int_{0}^{t}\mathcal{N}_{\psi }^{\alpha
}(t,s)\left\vert F_{u,g,h}(s)-F_{u,g,h}(s)\right\vert ds  \notag \\
&\leq &\frac{2L_{f}}{\Gamma (\alpha )}\int_{0}^{t}\mathcal{N}_{\psi
}^{\alpha }(t,s)e^{\delta \left[ \psi (s)-\psi (0)\right] }\left[ \left[
\psi (s)-\psi (0)\right] ^{\gamma -1}\left\Vert u-v\right\Vert _{B}\right] ds
\notag \\
&&+\frac{2L_{f}L_{h}}{\zeta \gamma \Gamma (\alpha )}\int_{0}^{t}\mathcal{N}%
_{\psi }^{\alpha }(t,s)e^{\delta \left[ \psi (s)-\psi (0)\right] }\left[ %
\left[ \psi (s)-\psi (0)\right] ^{\gamma }\left\Vert u-v\right\Vert _{B}%
\right] ds  \notag \\
&=&\left( 2L_{f}J_{1}+\frac{2L_{f}L_{h}}{\zeta \gamma }J_{2}\right)
\left\Vert u-v\right\Vert _{B},  \label{rt}
\end{eqnarray}%
where%
\begin{eqnarray*}
J_{1} &:&=\frac{1}{\Gamma (\alpha )}\int_{0}^{t}\mathcal{N}_{\psi }^{\alpha
}(t,s)e^{\delta \left[ \psi (s)-\psi (0)\right] }\left[ \psi (s)-\psi (0)%
\right] ^{\gamma -1}ds, \\
J_{2} &:&=\frac{1}{\Gamma (\alpha )}\int_{0}^{t}\mathcal{N}_{\psi }^{\alpha
}(t,s)e^{\delta \left[ \psi (s)-\psi (0)\right] }\left[ \psi (s)-\psi (0)%
\right] ^{\gamma }ds.
\end{eqnarray*}%
We also estimate $J_{1}$, $J_{2}$ terms separately. By Theorem (\ref{th2.3a}%
) we have%
\begin{equation}
J_{1}\leq e^{\delta \left[ \psi (b)-\psi (0)\right] }\frac{\Gamma (\gamma )}{%
\Gamma (\gamma +\alpha )}\left[ \psi (t)-\psi (0)\right] ^{\alpha +\gamma
-1},  \label{R1}
\end{equation}%
and
\begin{equation}
J_{2}\leq e^{\delta \left[ \psi (b)-\psi (0)\right] }\frac{\Gamma (\gamma +1)%
}{\Gamma (\gamma +\alpha +1)}\left[ \psi (t)-\psi (0)\right] ^{\alpha
+\gamma }.  \label{R2}
\end{equation}%
Equation (\ref{rt}) with (\ref{R1}) and (\ref{R2}), lead to
\begin{eqnarray*}
\left\Vert \mathcal{G}_{f}u-\mathcal{G}_{f}v\right\Vert _{C_{1-\gamma ;\psi }%
\left[ 0,b\right] } &\leq &2L_{f}e^{\delta \left[ \psi (b)-\psi (0)\right] }%
\frac{\Gamma (\gamma )}{\Gamma (\gamma +\alpha )}\left[ \psi (b)-\psi (0)%
\right] ^{\alpha }\left\Vert u-v\right\Vert _{B} \\
&&+\frac{2L_{f}L_{h}}{\zeta \gamma }e^{\delta \left[ \psi (b)-\psi (0)\right]
}\frac{\Gamma (\gamma +1)}{\Gamma (\gamma +\alpha +1)}\left[ \psi (b)-\psi
(0)\right] ^{\alpha +1}\left\Vert u-v\right\Vert _{B} \\
&\leq &2L_{f}e^{\delta \left[ \psi (b)-\psi (0)\right] }\frac{\Gamma (\gamma
)}{\Gamma (\gamma +\alpha )}\left( 1+\frac{L_{h}}{\zeta (\gamma +\alpha )}%
\right) \left[ \psi (b)-\psi (0)\right] ^{\alpha +1}\left\Vert
u-v\right\Vert _{B}.
\end{eqnarray*}%
By the inequality (\ref{EE1}), $\mathcal{G}_{f}$ is a contraction mapping on
$C[-r,b],$ via the Bielecki's norm\ $B$. An application the Banach
contraction principle shows that $\psi -$ Hilfer problem (\ref{equ 1}) has a
unique solution in $C\left( [-r,b],%
%TCIMACRO{\U{211d} }%
%BeginExpansion
\mathbb{R}
%EndExpansion
\right) \cap C_{1-\gamma ;\psi }\left( [0,b],%
%TCIMACRO{\U{211d} }%
%BeginExpansion
\mathbb{R}
%EndExpansion
\right) .$

The proof of UHML stability is just like in Theorem \ref{Th3.1} so we omit
it here.
\end{proof}
\end{theorem}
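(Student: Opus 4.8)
The plan is to mirror the argument of Theorem~\ref{Th3.1} line for line, the only change being that the weighted norm $\|\cdot\|_{C_{1-\gamma;\psi}}$ is everywhere replaced by the Bielecki norm $\|\cdot\|_{B}$, so that the contraction is now controlled by the smallness condition~(\ref{EE1}) in place of $(H_3)$. By Lemma~\ref{lem2.7} the problem~(\ref{equ 1}) is equivalent to the fixed point equation $\mathcal{G}_{f}u=u$ for the operator~(\ref{equ9}) on $C[-r,b]$, and the continuity of $\mathcal{G}_{f}$ is obtained exactly through the two ``Case'' computations used in Theorem~\ref{Th3.1}. Hence both conclusions reduce to proving that $\mathcal{G}_{f}$ is a contraction in $\|\cdot\|_{B}$ and then re-running the stability machinery; the latter, as indicated, transfers verbatim.

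First I would dispatch the trivial range: for $t\in[-r,0]$ both $\mathcal{G}_{f}u$ and $\mathcal{G}_{f}v$ reduce to $\varphi(t)$, so $\left\vert\mathcal{G}_{f}u(t)-\mathcal{G}_{f}v(t)\right\vert=0$. For $t\in(0,b]$ I would apply the Lipschitz bounds $(H_2)$ on $f$ and $h$ to $\left\vert F_{u,g,h}(s)-F_{v,g,h}(s)\right\vert$, bound each appearance of $u-v$ by its Bielecki weight $e^{\delta[\psi(s)-\psi(0)]}[\psi(s)-\psi(0)]^{\gamma-1}\|u-v\|_{B}$, and push the inner $h$-integral through the factor $1/(\zeta\gamma)$ precisely as in~(\ref{s2}). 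This collapses the whole estimate onto the two weighted fractional integrals $J_{1}$ and $J_{2}$ recorded in the statement.

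The hard part, and the one genuinely new ingredient relative to Theorem~\ref{Th3.1}, is the control of $J_{1}$ and $J_{2}$. Here I would use the crude but decisive bound $e^{\delta[\psi(s)-\psi(0)]}\le e^{\delta[\psi(b)-\psi(0)]}$, valid for $0<s\le t\le b$, to extract the exponential from the integrand, leaving the pure fractional integrals $I_{0^{+}}^{\alpha;\psi}[\psi(t)-\psi(0)]^{\gamma-1}$ and $I_{0^{+}}^{\alpha;\psi}[\psi(t)-\psi(0)]^{\gamma}$. Theorem~\ref{th2.3a} evaluates these in closed form through the $\Gamma$-quotient identity, producing the bounds~(\ref{R1}) and~(\ref{R2}). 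Substituting these back into~(\ref{rt}) and taking the maximum over $t\in[0,b]$ yields $\|\mathcal{G}_{f}u-\mathcal{G}_{f}v\|_{B}$ dominated by the left-hand side of~(\ref{EE1}) times $\|u-v\|_{B}$; then~(\ref{EE1}) makes $\mathcal{G}_{f}$ a contraction, and the Banach fixed point theorem supplies the unique solution in $C[-r,b]\cap C_{1-\gamma;\psi}[0,b]$.

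Finally, for claim (2) I would not reproduce the stability argument at all. Once uniqueness is in hand, the UHML estimate is generated identically to Theorem~\ref{Th3.1}: form the comparison operator $\mathcal{T}$, verify it is an increasing Picard operator via the same contraction constant, and combine Pachpatte's inequality (Lemma~\ref{tt}) with Lemma~\ref{le2.6} applied to $z=|v-u|$ to read off the constant $C_{E_{\alpha}}$. Since none of these steps feels the change of norm, I would simply remark that this portion carries over unchanged and omit the computation.
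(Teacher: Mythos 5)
Your proposal is correct and follows essentially the same route as the paper: the same trivial estimate on $[-r,0]$, the same Lipschitz reduction to the weighted integrals $J_{1},J_{2}$, the same extraction of $e^{\delta[\psi(b)-\psi(0)]}$ followed by Theorem \ref{th2.3a} to obtain (\ref{R1})--(\ref{R2}), and the same deferral of the stability argument to Theorem \ref{Th3.1}. No substantive differences to report.
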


\begin{remark}
\qquad

\begin{enumerate}
\item If $\int_{0}^{t}h(t,s,y(s),y(g(s))ds=0,$ then problem (\ref{equ 1})
reduces to the problem (\ref{lwo}) in \cite{LWO}.

\item If $\int_{0}^{t}h(t,s,y(s),y(g(s))ds=0,$ and $\beta =1,$ then problem (%
\ref{equ 1}) reduces to the problem (\ref{wz}) in \cite{WZ}.

\item If $\alpha =\beta =1,$ then problem (\ref{equ 1}) reduces to the
problem (\ref{ku1}) in \cite{KU1}.
\end{enumerate}
\end{remark}

\section{\textbf{An example \label{Sec5}}}

Will be provided in the revised submission.

\section{Conclusion}

We have obtained some existence, uniqueness and Ulam--Hyers--Mittag-Leffler
(UHML) stability results for the solution of Cauch type problem for $\psi -$%
Hilfer FFIDEs based on the reduction of fractional differential equations
(FDEs) to integral equations. The employed techniques the Picard operator
method and generalized Pachpatte's inequality are quite general and
effective. We trust the reported results here will have a positive impact on
the development of further applications in engineering and applied sciences.

\end{document}